\DeclareSymbolFont{yhlargesymbols}{OMX}{yhex}{m}{n}
\DeclareMathAccent{\wideparen}{\mathord}{yhlargesymbols}{"F3}
\numberwithin{equation}{section}
\newtheorem{theorem}{Theorem}[section]
\newtheorem{lemma}[theorem]{Lemma}
\newtheorem{definition}{Definition}[section]
\theoremstyle{remark}
\newtheorem{remark}[theorem]{Remark}
\renewcommand{\div}{\mathop{\rm div}\nolimits}
\newcommand{\supp} {\mathop{\mathrm{supp}}\nolimits}
\newcommand{\EQ}[1]{\begin{equation} #1 \end{equation}}
\begin{document}
\title{Existence of global weak solutions to the Navier–Stokes equations in weighted spaces}
\author{Misha Chernobai\footnote{Department of Mathematics, UBC, Canada, mchernobay@gmail.com}}
\date{\today}
\maketitle

 \abstract{We obtain a global existence result for the three-dimensional Navier-Stokes equations
with a large class of initial data allowing growth at spatial infinity. Our work is a continuation of the results \cite{BKT}, \cite{BK} and proves global existence of suitable weak solutions with initial data in different weighted spaces as well as eventual regularity.}

\section{Introduction}
Let $(u,p)$ be a solution the three dimensional Navier-Stokes equations in the sense of distributions:
\EQ{
\left\{\gathered
\partial_t u-\Delta u+(u\cdot\nabla)u+\nabla p=0,
\\
\div u=0
\endgathered\right. ~~\text{in}~\Bbb R^3\times (0,T)
\label{5NS}
}
For $T>0$ and initial data $u(x,0)=u_0(x)$. One of the first results about global in time existence of weak solutions was done by J.Leray in \cite{Leray} with divergence free initial data $u_0\in L^2(\Bbb R^3)$. Later it was improved by Lemari\'{e}-Rieusset in \cite{LR} for local initial data $u_0\in L^2_{uloc}$, the local $L^2_{uloc}$ space is defined as following:
\EQ{
\|u_0\|_{L^2_{uloc}}=\sup_{x\in \Bbb R^3}\|u_0\|_{L^2(B(x,1))}<\infty .
}
The focus of this work is on finding a different class for initial data, for which we can prove global in time existence, for that we need to define local-energy solutions. We will use the following notations, for any cube $Q\in \Bbb R^3$ with radius $r$ we will denote $Q^*$ a slightly bigger cube with the same center and radius $5r/4$ and $Q^{**}$ is similarly a small extension of $Q^*$. Next we introduce the definition of local-energy solutions:
\begin{definition}
  A vector field $u\in L^2_{loc}(\Bbb R^3\times (0,T))$, where $T>0$ is a local energy solution to \eqref{5NS} with initial data $u_0\in L^2_{loc}(\Bbb R^3)$ if the following holds:
  \begin{itemize}
    \item $u\in\underset{R>0}{\cap}L^{\infty}(0,T;L^2(B_R(0))),~\nabla u\in L^2_{loc}(\Bbb R^3\times[0,T))$,
    \item there is $p\in L^{\frac32}_{loc}(\Bbb R^3\times[0,T))$ such that ${u,p}$ is a solution to NSE \eqref{5NS} in a sense of distributions,
    \item For all compacts $\Omega\subset\Bbb R^3$ we have $u(t)\underset{t\rightarrow0+}{\rightarrow} u_0$ in $L^2(\Omega)$,
    \item $u$ is a Caffarelli-Kohn-Nirenberg solution, for all $\xi \in C^{\infty}_0(\Bbb R^3\times (0,T))$, $\xi\ge0,$
    \EQ{\gathered
    2\int\int|\nabla u|^2\xi ~dx~dt\le\int\int |u|^2(\partial_t\xi+\Delta \xi)~dx~dt+
    \\
    \int\int(|u|^2+2p)(u\cdot \nabla\xi)~dx~dt,
    \endgathered}
    \item the function $t\mapsto\int u(x,t)w(x)~dx$ is continuous on $[0,T)$ for any $w\in L^2(\Bbb R^3)$ with a compact support,
    \item for every cube $Q\subset \Bbb R^3$, there exists $p_Q(t)\in L^{\frac 32}(0,T)$ such that  for $x\in Q^*$ and $0<t,T$,
    \EQ{\gathered
    p(x,t)-p_Q(t)=-\frac13|u(x)|^2+p.v.\int_{y\in Q^{**}}K_{ij}(x-y)(u_i(y,s)u_j(y,s))~dy
    \\
    +\int_{y\notin Q^{**}}(K_{ij}(x-y)-K_{ij}(x_Q-y))(u_i(y,s)u_j(y,s))~dy,
    \endgathered}
    where $x_Q$ is the center of $Q$ and $K_{ij}=\partial_i\partial_j(4\pi|y|)^{-1}$.
  \end{itemize}
\end{definition}
This definition was used in \cite{BKT} and differs from previous definitions in \cite{LR, KiSe, JiaSverak-minimal, BT8, RS} since we only require initial data in $L^2_{loc}$ and not $u_0\in L^2_{uloc}$. Nevertheless, local energy solutions first were proven to exist locally in time for initial data in $L^2_{uloc}(\Omega)$ for $\Omega=\Bbb R^3$ or $\Bbb R^3_+$. Most of the results for global existence included a decay assumption on the data, see \cite{LR, KiSe} for $v_0\in E^2$. One of the examples of such conditions is:
\EQ{
\lim_{R\rightarrow\infty}\sup_{x_0\in\Bbb R^3}\frac{1}{R^2}\int\limits_{B_R(x_0)}|u_0|^2~dx=0.
}
Later global existence for non-decaying data in uniform space $L^2_{uloc}$ was done in two dimensional case by Bason \cite{Basson}. For initial data in $L^2_{loc}\setminus L^2_{uloc}$ Fernandez-Dalgo and Lemari\'{e}-Rieusset \cite{FDLR} constructed global solutions with the following condition:
\EQ{
\int_{\Bbb R^3}\frac{|u_0(x)|^2}{(1+|x|)^2}~dx<\infty.
}
Bradshaw and Kukavica \cite{BK} constructed local in time solutions for initial data in the space ${\mathring{M}}^{p,q}_{\mathcal{C}}$ which is related to our project and will be defined later. In \cite{BKT} with Tai-Peng Tsai they proved global existence in class ${\mathring{M}}^{p,q}_{\mathcal{C}}$ , this class strictly includes the initial data from \cite{BT8, FDLR}. In our paper we want to extend this result to different weighted spaces of initial data $F^q_r$.

To define spaces ${\mathring{M}}^{p,q}_{\mathcal{C}},F^q_r$ we need to construct a certain family of cubes in $\Bbb R^3$ that was defined in \cite{BKT}. For $n\in \Bbb N$, we denote $S_0=\{x:~|x_i|\le 2;~i=1,2,3\}$ and $R_n=\{x:~|x_i|\le 2^n; i=1,2,3\}$. Let $S_n=\bar{ R_{n+1}\setminus R_n}$ for $n\in \Bbb N\setminus\{1\}$ and $S_1=\bar{R_2\setminus S_0}$. Then $|S_n|=56\cdot 2^{3n}$. We separate $S_0=\cup_1^{64} Q_0^{k_0}$ into 64 cubes with side-length $1$ and $S_n=\cup_{k_n=1}^{56}Q_n^{k_n}$ into 56 cubes with side-length $2^n$, the set of all these cubes we denote as $\mathcal{C}=\cup_{n=0}^{\infty}\cup_{k_n}Q_n^{k_n}$. This set will satisfy the following properties
\begin{itemize}
  \item Side-length of each cube is proportional to the distance to the origin.
  \item Adjacent cubes have proportional volume.
  \item The distance between cubes $Q,Q'$ is proportional to $\min\{|Q|^{\frac13}, |Q'|^{\frac13}\}$.
  \item The amount of cubes $Q'$, such that $|Q'|<|Q|$ is bounded above by $|Q|^{\frac13}$
\end{itemize}

Using set $\mathcal{C}$ we reintroduce the definition of $M^{p,q}_{\mathcal{C}}$ and $\mathring{M}^{p,q}_{\mathcal{C}}$ from \cite{BKT}:
\begin{definition}
  Let $p\in [1,\infty), q\ge 0$. We have $f\in M^{p,q}_{\mathcal{C}}$ if
  \EQ{
  \|f\|^p_{M^{p,q}_{\mathcal{C}}}:=\sup_{Q\in\mathcal{C}}\frac{1}{|Q|^{\frac{q}{3}}}\int_Q|f(x)|^p~dx<\infty.
  }
  If $f\in M^{p,q}_{\mathcal{C}}$ and satisfies the following condition then $f\in\mathring{M}^{p,q}_{\mathcal{C}}$:
  \EQ{
  \frac{1}{|Q|^{\frac{q}{3}}}\int_{Q}|f|^p~dx\rightarrow 0,~\text{as}~|Q|\rightarrow\infty,~Q\in \mathcal{C}
  }
  \end{definition}
Note that global existence and eventual regularity result  \cite{BKT} considers initial data from $\mathring{M}^{2,2}_{\mathcal{C}}$ and improves the constructions in \cite{BT8, BK, FDLR}. The goal of our project is to further improve the result of Bradshaw, Kukavica and Tsai \cite{BKT} for another set of initial data, using the family of cubes $\mathcal{C}$ let us introduce the following spaces:
\begin{definition}
Let $q\le 2$ and $r>2$, we define spaces $F^q_r$:
\EQ{
F^q_r=\Big\{f\in L^2_{loc}(\Bbb R^3)~\Big|~\sum_{n,k_n}\Big(\frac{1}{|Q_n^{k_n}|^{\frac{q}{3}}}\int_{Q_n^{k_n}}|f|^2~dx\Big)^{\frac r2}<\infty\Big\}
}
With the norm $\|f\|^r_{F^q_r}=\sum_{n,k_n}^{\infty}\Big(\frac{1}{|Q_n^{k_n}|^{\frac{q}{3}}}\int_{Q_n^{k_n}}|f|^2~dx\Big)^{\frac{r}2}$
\end{definition}
Both spaces $M^{p,q}_{\mathcal{C}}$ and $F^q_r$ are equivalent to certain Herz spaces. Let $A_k=B_{2^{k+1}}\setminus B_{2^k}$ where $B_{2^k}$ is a ball in $\Bbb R^3$ with radius $2^k$. Let $n\in\Bbb N,~s\in\Bbb R$ and $p,q\in(0,\infty]$, we define the non-homogeneous Herz space $K_{p,q}^s(\Bbb R^n)$ as a space of functions $f\in L^p_{loc}(\Bbb R^n\setminus\{0\})$ with the following finite norm:
\EQ{
\|f\|_{K^s_{p,q}}=\left\{\gathered
\Big(\sum_{k\in\Bbb N_0}2^{ksq}\|f\|^q_{L^p(A_k)}\Big)^{\frac{1}{q}} ~\text{if}~q<\infty,
\\
\sup_{k\in\Bbb N_0}2^{ks}\|f\|_{L^p(A_k)} ~\text{if}~q=\infty.
\endgathered\right.
}
Then the space $M^{p,q}_{\mathcal{C}}$ is equivalent to Herz space $K^{-q}_{p,\infty}$ and $F^q_r$ is equivalent to $K^{-q}_{2,r}, ~r<+\infty$. Local in time existence of mild solutions with initial data from some Herz spaces was proven by Tsutsui \cite{Tsutsui} (in case $p>3$). Global existence of local energy solutions for the case $K^{-2}_{2,\infty}$ follows from \cite{BKT}. Our goal is to extend the construction of global in time local energy solutions for initial data in Herz spaces $K^{-q}_{2,r}$.

The following is our main theorem on global existence of the solutions with initial data from $F^q_r$:
\begin{theorem}
   Let $q\le 2,~ r>2$ and assume $u_0\in F^q_r$ is divergence free. Then there exists $u:\Bbb R^3\times(0,\infty)\rightarrow\Bbb R^3$ and $p:\Bbb R^3\times(0,\infty)\rightarrow\Bbb R$ so that $(u,p)$ is a local energy solution to the Navier-Stokes equations on $\Bbb R^3\times (0,\infty)$ corresponding to initial data $u_0$.
\end{theorem}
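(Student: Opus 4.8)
The plan is to follow the now-standard construction of local-energy solutions by approximation, with the entire difficulty concentrated in a single global-in-time a priori estimate adapted to the weight and scale structure of $F^q_r$. Concretely, I would approximate the data by $u_0^{(m)} = \rho_{1/m} * (u_0 \mathbf{1}_{B_m})$, which is divergence-free, smooth, compactly supported, and bounded in $F^q_r$ uniformly in $m$, since truncation and mollification do not increase the defining sum by more than a fixed constant. For each $m$ the data lies in $L^2(\mathbb{R}^3)$, so a Leray--Hopf weak solution $u^{(m)}$ exists globally and satisfies the local energy inequality. The theorem then reduces to two tasks: first, to prove a bound on $u^{(m)}$, uniform in $m$ and valid on every finite time interval, that controls the local energies $\int_{Q} |u^{(m)}(t)|^2\,dx$ in terms of $\|u_0\|_{F^q_r}$; and second, to extract a limit and verify it satisfies every clause of the definition of a local energy solution.

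For the a priori estimate I would track the scale-weighted energy functional
\[ \mathcal{E}(t) = \sum_{n,k_n} \Big( \frac{1}{|Q_n^{k_n}|^{q/3}} \int_{(Q_n^{k_n})^{*}} |u^{(m)}(x,t)|^2 \, dx \Big)^{r/2}, \]
so that $\mathcal{E}(0) \lesssim \|u_0\|_{F^q_r}^r$. Testing the CKN local energy inequality against a time-independent cutoff $\xi_Q$ that equals $1$ on $Q$, is supported in $Q^*$, and obeys $|\nabla \xi_Q| \lesssim |Q|^{-1/3}$, $|\Delta \xi_Q| \lesssim |Q|^{-2/3}$, produces for each cube a differential inequality of the form $\frac{d}{dt}\int |u|^2\xi_Q + 2\int |\nabla u|^2 \xi_Q \lesssim |Q|^{-2/3}\int_{Q^*}|u|^2 + |Q|^{-1/3}\int_{Q^*}(|u|^2 + 2|p|)|u|$. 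The cubic self-interaction $\int_{Q^*}|u|^3$ is handled by the Gagliardo--Nirenberg interpolation $\|u\|_{L^3(Q^*)}^3 \lesssim \|u\|_{L^2(Q^*)}^{3/2}\|\nabla u\|_{L^2(Q^*)}^{3/2} + |Q|^{-1/2}\|u\|_{L^2(Q^*)}^3$, with Young's inequality absorbing the dissipation into the left-hand side.

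The main obstacle is the pressure term, which is genuinely nonlocal across scales. I would insert the pressure decomposition from the definition, splitting $p - p_Q$ into the local Calder\'on--Zygmund part over $Q^{**}$, controlled by $\|u\|_{L^3(Q^{**})}^2$, and the far-field part, whose kernel difference obeys $|K_{ij}(x-y) - K_{ij}(x_Q - y)| \lesssim |Q|^{1/3}|x_Q - y|^{-4}$ for $y \notin Q^{**}$. The far-field contribution couples the energy of a cube $Q$ to the energies of all other cubes $Q'$ through a coupling kernel that, thanks to the four geometric properties of $\mathcal{C}$ (side-length proportional to distance to the origin, comparable adjacent volumes, inter-cube distance proportional to the smaller side-length, and at most $|Q|^{1/3}$ cubes smaller than $Q$), decays fast enough in the scale gap $|n-n'|$ and in spatial separation to define a bounded operator on the weighted $\ell^{r/2}$ space in which $\mathcal{E}$ lives. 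Establishing this summability is precisely where the hypotheses $q \le 2$ and $r > 2$ are consumed: they guarantee both convergence of the kernel sums against the weights $|Q|^{-q/3}$ and that the discrete Schur/convolution bound closes with exponent $r/2 > 1$. I expect this cross-scale bookkeeping, rather than any single inequality, to be the technical heart of the paper.

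Combining the three ingredients yields a closed inequality $\frac{d}{dt}\mathcal{E}(t) \lesssim \mathcal{E}(t) + \mathcal{E}(t)^{\theta}$ for a suitable exponent $\theta$, from which Gr\"onwall gives $\mathcal{E}(t) \le C(\|u_0\|_{F^q_r}, T) < \infty$ for every finite $T$, uniformly in $m$; the companion bound on $\int_0^T\!\int |\nabla u^{(m)}|^2 \xi_Q$ is then read off from the same inequality. With these uniform local bounds in hand the passage to the limit is routine: Aubin--Lions compactness on each bounded cylinder yields strong $L^3_{loc}$ convergence of a subsequence, enough to pass to the limit in the nonlinearity and in the pressure formula, while the local energy inequality survives by weak lower semicontinuity and the initial-data attainment in $L^2(\Omega)$ follows from the uniform energy bound together with the strong convergence. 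Verifying the remaining clauses of the definition, namely the distributional form of \eqref{5NS}, weak-in-time continuity against compactly supported test functions, and the pressure representation, then completes the construction.
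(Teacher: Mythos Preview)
Your overall architecture (approximate by $L^2$ data, take Leray solutions, derive a uniform a priori bound from the local energy inequality plus a pressure decomposition, extract a limit) matches the paper. But the step that carries the whole theorem is global-in-time control, and your argument for it does not work as written.

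You claim that after closing the local energy balance you obtain $\frac{d}{dt}\mathcal{E}\lesssim \mathcal{E}+\mathcal{E}^\theta$ and that ``Gr\"onwall gives $\mathcal{E}(t)\le C(\|u_0\|_{F^q_r},T)$ for every finite $T$''. This is false when $\theta>1$: the ODE $y'=y+y^\theta$ blows up in finite time, so Gr\"onwall only yields a bound on a time interval whose length depends on the size of the initial data and the constants. The paper's own a priori estimate (Lemma~\ref{5aprrox}) has exactly this superlinear structure (cubic in $f=\|u\|^r_{F^{q,N}_r}$), and the accompanying Gr\"onwall lemma (Lemma~\ref{5Gronwall}) explicitly gives control only up to $T_1=a/(2ab_1+(2a)^m b_2)$. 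What converts this into a \emph{global} result is the mechanism you omit entirely: the paper does not work in $F^q_r$ but in the rescaled spaces $F^{q,N}_r$ built from the coarsened family $\mathcal{C}_N$ of cubes of side at least $2^N$. In that family the coefficients $b_1,b_2$ pick up factors $2^{-rN}$ and $2^{-rN(2-q)}$, and, crucially, Lemma~\ref{5intialdata} shows $\|u_0\|_{F^{q,N}_r}\to 0$ as $N\to\infty$. Hence for any prescribed time horizon $n$ one can choose $N$ large enough that $T_1>n$, which is the content of Lemma~\ref{5uniform_bound}. Without this scale-shifting trick your bound is only local in time and you have not proved global existence.

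A secondary issue: $\rho_{1/m}*(u_0\mathbf{1}_{B_m})$ is not divergence free, since truncation destroys the condition and mollification cannot restore it. The paper fixes this with a Bogovski\u{\i} correction supported in the annulus $B_{2R}\setminus B_R$ (Lemma~\ref{5approx}); you would need to do the same or use a different divergence-preserving localization.
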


Additionally we will use the following sequence of norms $F^{q,m}_r$ using set of cubes $\mathcal{C}$.

\begin{definition}
For $m\in\Bbb N$ we denote $Q_{0,m}=R_{m}$ and for any $n\ge 1,k_n=1...56,~Q_{n,m}^{k_n}=Q_{m+n}^{k_{n}}$, where $Q_n^{k_n}$ are cubes of family $\mathcal{C}$. With respect to these sets we can denote norms
$$
\|f\|^r_{F^{q,m}_r}=\sum_{n,k_n}\Big(\frac{1}{|Q_{n,m}^{k_n}|^{\frac{q}{3}}}\int_{Q_{n,m}^{k_n}}|f|^2~dx\Big)^{\frac{r}2}.
$$
\end{definition}

We will denote $\mathcal{C}_m$ family of cubes in this definition that corresponds to space $F^{q,m}_r$. The following lemma shows the relation between spaces $F^q_r$ and $F^{q,m}_r$:
\begin{lemma}\label{5intialdata}
  Let $u\in F^q_r$, then we have the following two properties
  \begin{itemize}
  \item $\|u\|_{F^{q,n}_{r}}\underset{n\rightarrow\infty}\rightarrow 0$.
    \item For any $n$ $u\in F^{q,n}_{r}$ and there exists constant $C$ independent on $n$ such that $$\|u\|_{F^{q,n}_{r}}\le C\|u\|_{F^{q}_{r}}$$.
  \end{itemize}
\end{lemma}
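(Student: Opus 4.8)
The plan is to reduce both claims to a single pointwise-in-$m$ estimate comparing the ``lumped'' central cube $R_m$ to the small cubes of $\mathcal C$ that it contains. For $Q\in\mathcal C$ write $a_Q=|Q|^{-q/3}\int_Q|f|^2\,dx$, so that $\|f\|_{F^q_r}^r=\sum_{Q\in\mathcal C}a_Q^{r/2}$, and set $s=r/2>1$. Since $R_m$ is the disjoint union (up to null sets) of those $Q\in\mathcal C$ lying in the shells $S_0,\dots,S_{m-1}$, the family $\mathcal C_m$ splits the $F^{q,m}_r$-norm as $\|f\|_{F^{q,m}_r}^r=A_m^{s}+T_m$, where
\[
A_m=\frac1{|R_m|^{q/3}}\int_{R_m}|f|^2\,dx=\sum_{Q\subset R_m}w_Q^{(m)}\,a_Q,\qquad w_Q^{(m)}=\Big(\frac{|Q|}{|R_m|}\Big)^{q/3},
\]
and $T_m=\sum_{Q\not\subset R_m}a_Q^{r/2}$ is exactly the tail of the convergent series $\|f\|_{F^q_r}^r$. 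Note $0\le w_Q^{(m)}\le1$ because $|Q|\le|R_m|$ and $q>0$. Correspondingly $\|f\|_{F^q_r}^r=\sum_{Q\subset R_m}a_Q^{r/2}+T_m$, so everything comes down to controlling $A_m^{s}$ by $\sum_{Q\subset R_m}a_Q^{s}$.

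The key step is the uniform bound $A_m^{s}\le C\sum_{Q\subset R_m}a_Q^{s}$ with $C$ independent of $m$. First I would estimate the total weight $W_m=\sum_{Q\subset R_m}w_Q^{(m)}$ using the explicit geometry of $\mathcal C$: the shell $S_n$ contributes $56$ cubes of volume $2^{3n}$ (and $S_0$ contributes $64$ cubes of volume $1$), so
\[
W_m=\frac{1}{|R_m|^{q/3}}\Big(64+56\sum_{n=1}^{m-1}2^{nq}\Big),\qquad |R_m|^{q/3}=2^{(m+1)q}.
\]
Because $q>0$ the geometric sum is comparable to $2^{mq}$, so $W_m$ is bounded by a constant $W_q$ uniformly in $m$; this is the one place where $q>0$ (equivalently the decay of the Herz weight) is essential, and it is the crux of the argument. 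Given this, writing $\mu_Q=w_Q^{(m)}/W_m$ as a probability weight and applying Jensen's inequality for the convex function $t\mapsto t^{s}$ ($s>1$), together with $w_Q^{(m)}\le1$, yields $A_m^{s}\le W_m^{\,s-1}\sum_{Q\subset R_m}w_Q^{(m)}a_Q^{s}\le W_q^{\,s-1}\sum_{Q\subset R_m}a_Q^{s}$. This immediately gives the second bullet: $\|f\|_{F^{q,m}_r}^r=A_m^{s}+T_m\le W_q^{s-1}\sum_{Q\subset R_m}a_Q^{s}+T_m\le C^r\|f\|_{F^q_r}^r$ with $C=\max\{1,W_q^{(s-1)/r}\}$, and in particular $u\in F^{q,m}_r$ for every $m$.

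For the first bullet I would argue that each of the two pieces vanishes as $m\to\infty$. The tail $T_m\to0$ since it is the remainder of the convergent series $\sum_Q a_Q^{r/2}=\|f\|_{F^q_r}^r$. For $A_m$, I extend $w_Q^{(m)}=0$ to $Q\not\subset R_m$ and use the bound from the previous step, $A_m^{s}\le W_q^{\,s-1}\sum_{Q\in\mathcal C}w_Q^{(m)}a_Q^{s}$. For each fixed $Q$ one has $w_Q^{(m)}=(|Q|/|R_m|)^{q/3}\to0$ as $m\to\infty$ (again using $q>0$), while $0\le w_Q^{(m)}a_Q^{s}\le a_Q^{s}$ with $\sum_Q a_Q^{s}<\infty$; hence by dominated convergence $\sum_Q w_Q^{(m)}a_Q^{s}\to0$, so $A_m^{s}\to0$. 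Combining, $\|f\|_{F^{q,m}_r}^r=A_m^{s}+T_m\to0$, which is the first bullet.

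I expect the only genuine obstacle to be the uniform weight estimate on $W_m$: it is where the specific summability of cube volumes across shells and the sign condition $q>0$ are used, and where the constant $C$ in the second bullet is produced. The remaining ingredients (Jensen or H\"older with exponent $s=r/2>1$, and the dominated-convergence tail argument) are routine once $W_m\le W_q<\infty$ is in hand. A minor bookkeeping point is the precise indexing of which shells are absorbed into $R_m$ versus kept as separate cubes of $\mathcal C_m$; this only shifts finitely many terms between $A_m$ and $T_m$ and does not affect either conclusion.
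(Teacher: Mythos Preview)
Your proof is correct and arrives at the same result by a cleaner route than the paper. The paper's argument for the central-cube term $A_m$ proceeds by splitting the sum over $m=0,\dots,n-1$ at an intermediate level $l\approx n/2$, bounding the low-index part by $C\,2^{-(n-l)q}\sup_{m,k_m}a_{m,k_m}^{2/r}$ and the high-index part by H\"older with exponents $r/2$ and $r/(r-2)$, then letting $n\to\infty$; the uniform bound (second bullet) is obtained by applying the same H\"older step to the whole sum. You instead package the entire central contribution as a single weighted sum $A_m=\sum_Q w_Q^{(m)}a_Q$, observe that $W_m=\sum_Q w_Q^{(m)}$ is uniformly bounded (this is exactly the geometric-series estimate $\sum_{n<m}2^{(n-m)q}\le C_q$ that also sits behind the paper's H\"older step), and then apply Jensen once to get $A_m^s\le W_q^{s-1}\sum_Q w_Q^{(m)}a_Q^s$; the first bullet then falls out of dominated convergence, avoiding the auxiliary splitting at level $l$. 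Both arguments rely on the same structural input, namely $q>0$ to make the weight series summable (the paper uses this implicitly in \eqref{5113}); your version makes this hypothesis explicit and buys a more uniform treatment of the two bullets, while the paper's split gives a slightly more hands-on quantitative picture of where the smallness comes from.
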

\begin{proof}
Let us start with the first property, since the second one will follow from the proof. For the first property we only have to prove the convergence of the first term, corresponding to $Q_{0,n}$, since we know that $u\in F^q_r$ and the rest of the sum converges to $0$:
  $$
  \frac{1}{2^{nq}}\int_{B_{2^n}}|u|^2~dx\underset{n\rightarrow\infty}\rightarrow0.
  $$
  We denote $a_{m,k_m}=\Big(\frac{1}{2^{mq}}\int_{Q_m^{k_m}}|u|^2~dx\Big)^{\frac{r}2}$, for $m\le n$, $k_m\le 64$, here $Q_m^{k_m}$ are cubes in the family $\mathcal{C}$ with radius $2^m$, the following estimate holds
  \EQ{\label{51.12}\gathered
  \frac{1}{2^{nq}}\int_{B_{2^n}}|u|^2~dx=\frac{1}{2^{nq}}\sum_{m=0}^{n-1}\sum_{k_m}\int_{Q_{m}^{k_m}}|u|^2~dx=\frac{1}{2^{nq}}
  \sum_{m=0}^{n-1}\sum_{k_m}2^{mq}{a_{m,k_m}}^{\frac{2}{r}}=
  \\
  =\sum_{m=0}^{n-1}\sum_{k_m}\frac{1}{2^{(n-m)q}}{a_{m,k_m}}^{\frac{2}{r}}\le \sum_{m=0}^{l-1}\sum_{k_m}\frac{1}{2^{(n-m)q}}{a_{m,k_m}}^{\frac{2}{r}}+\sum_{m=l-1}^{n-1}\sum_{k_m}\frac{1}{2^{(n-m)q}}{a_{m,k_m}}^{\frac{2}{r}}.
  \endgathered}
  Where $l\le n-1$ is an integer that we will choose later. Next we will estimates last two terms of \eqref{51.12} separately
  \EQ{
  \sum_{m=0}^{l-1}\sum_{k_m}\frac{1}{2^{(n-m)q}}{a_{m,k_m}}^{\frac{2}{r}}\le \frac{C}{2^{(n-l)q}}\sup_{n,k_n}|a_{n,k_n}|^{\frac{2}{r}}
  }
  For the last term we will use Holder inequality:
  \EQ{\label{5113}\gathered
  \sum_{m=l-1}^{n-1}\sum_{k_m}\frac{1}{2^{(n-m)q}}{a_{m,k_m}}^{\frac{2}{r}}\le \Big(\sum_{m=l-1}^{n-1}\sum_{k_m}\frac{1}{2^{\frac{r(n-k)q}{r-2}}}\Big)^{\frac{r-2}{r}}\Big(\sum_{m=l-1}^{\infty}\sum_{k_m}a_{m,k_m}\Big)^{\frac{2}{r}}
  \le
  \\
  \le C(q,r)\Big(\sum_{m=l-1}^{\infty}\sum_{k_m}a_{m,k_m}\Big)^{\frac{2}{r}}.
  \endgathered}
  Next we choose $n$ sufficiently large and $l\approx\frac{n}{2}$, with this both terms will be sufficiently small, which implies the convergence:
   $$
  \frac{1}{2^{nq}}\int_{B_{2^n}}|u|^2~dx\underset{n\rightarrow\infty}\rightarrow 0
  $$
  To check the second statement we again only need to estimate the first term for which we will use \eqref{51.12} and apply Holder inequality similarly to \eqref{5113}
  \EQ{\gathered
  \frac{1}{2^{nq}}\int_{B_{2^n}}|u|^2~dx=\sum_{m=0}^{n-1}\sum_{k_m}\frac{1}{2^{(n-m)q}}{a_{m,k_m}}^{\frac{2}{r}}\le
  \\
  \Big(\sum_{m=1}^{n-1}\sum_{k_m}\frac{1}{2^{\frac{r(n-k)q}{r-2}}}\Big)^{\frac{r-2}{r}}\Big(\sum_{m=1}^{\infty}\sum_{k_m}a_{m,k_m}\Big)^{\frac{2}{r}}\le
  C(q,r)\Big(\sum_{m=1}^{\infty}\sum_{k_m}a_{m,k_m}\Big)^{\frac{2}{r}}=C\|u\|_{F^q_r}^2.
  \endgathered}
  Here $C=C(q,r)$ is independent of $n$ which finishes the proof.
\end{proof}
Using Lemma \ref{5intialdata} and a priori bounds we can establish the following regularity result:
\begin{theorem}(Eventual Regularity)\label{5Regularity}
    Take $q\le 1$ and $u_0\in F^q_r$ as divergence-free initial data, and let $(u,p)$ be a local energy solution on $\Bbb R^3\times (0,\infty)$ corresponding to $u_0$. Assume additionally that:
  \EQ{
  \|u(\cdot,t)\|_{F_{r}^{q}}^r+\sum_{m=1}^{\infty}\Big(\frac{1}{|Q_m|^{\frac{q}{3}}}\int_0^t\int_{Q_m}|\nabla u|^2~dx~dt\Big)^{\frac{r}2}< \infty,
  }
  for all $t<\infty$. Then for any $\delta>0$, there exists a time $\tau$ depending on $u_0,\delta$ so that $u$ is smooth on
  \EQ{
  \{(x,t),t>\max\{\delta|x|^2,\tau\}\}
  }
\end{theorem}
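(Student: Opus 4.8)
The plan is to reduce the statement to the Caffarelli--Kohn--Nirenberg (CKN) $\epsilon$-regularity criterion by a parabolic rescaling adapted to the paraboloid $\{t>\delta|x|^2\}$. Recall the single-scale criterion: there is a universal $\epsilon_*>0$ such that if $(u,p)$ is a suitable weak solution on the cylinder $Q_\rho(z_0)=B_\rho(x_0)\times(t_0-\rho^2,t_0)$ and
\EQ{
\frac{1}{\rho^2}\int_{Q_\rho(z_0)}\bke{|u|^3 + |p - \bar p_{B_\rho}|^{3/2}}\,dx\,dt < \epsilon_*,
}
then $u$ is bounded, hence (by the usual bootstrap for the Navier--Stokes equations) smooth, on $Q_{\rho/2}(z_0)$. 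Fix $\delta>0$ and a point $(x_0,t_0)$ with $t_0>\max\{\delta|x_0|^2,\tau\}$, where $\tau$ is to be chosen. Set $\lambda=\sqrt{t_0}$ and define the rescaled solution $u^\lambda(y,s)=\lambda u(\lambda y,\lambda^2 s)$, $p^\lambda(y,s)=\lambda^2 p(\lambda y,\lambda^2 s)$, which is again a local energy solution with initial data $u_0^\lambda(y)=\lambda u_0(\lambda y)$. The point $(x_0,t_0)$ becomes $(y_0,1)$ with $y_0=x_0/\sqrt{t_0}$, and the constraint $t_0>\delta|x_0|^2$ gives $|y_0|\le\delta^{-1/2}$; thus after rescaling every point of the region sits at time $s=1$ inside the fixed ball $B_{\delta^{-1/2}}(0)$, and it suffices to verify the displayed smallness for $u^\lambda$ at a fixed $O(1)$ scale $\rho$ around such $(y_0,1)$.

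First I would show that the rescaled data is small on the cubes relevant to $(y_0,1)$. A direct change of variables gives, for any cube $Q$,
\EQ{
\frac{1}{|Q|^{q/3}}\int_Q |u_0^\lambda|^2\,dy = \lambda^{q-1}\,\frac{1}{|\lambda Q|^{q/3}}\int_{\lambda Q}|u_0|^2\,dx .
}
The cubes $Q$ of the family $\mathcal{C}$ meeting $B_\rho(y_0)$ have side $\sim 1$ and centre within $\delta^{-1/2}+\rho$ of the origin, so the dilated cubes $\lambda Q$ have side $\sim\sqrt{t_0}$ and lie in the dyadic annulus at distance $\sim\sqrt{t_0}$ from the origin, i.e. among the cubes $Q^{k_n}_n$ with $2^n\sim\sqrt{t_0}$, at generation $n\sim\log_2\sqrt{t_0}\to\infty$ as $\tau\to\infty$. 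Since $q\le 1$ we have $\lambda^{q-1}\le 1$, so the weighted local energy of $u_0^\lambda$ near $y_0$ is bounded by the weighted local energy of $u_0$ on those far cubes; by the tail decay of $u_0\in F^q_r$ recorded in Lemma \ref{5intialdata} (equivalently $\|u_0\|_{F^{q,n}_r}\to 0$), this is $<\eta$ for $\tau$ large, with $\eta$ an absolute smallness threshold to be fixed below. This is exactly the step where the hypothesis $q\le 1$ is used, and it fixes $\tau=\tau(u_0,\delta,\eta)$.

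Next I would transfer the smallness from the data to the solution and control the pressure. Using the a priori bounds together with the local energy inequality in the definition of a local energy solution, the natural CKN quantities of $u^\lambda$,
\EQ{
A(\rho) = \sup_{1-\rho^2\le s\le 1}\frac{1}{\rho}\int_{B_\rho(y_0)}|u^\lambda|^2\,dy,\qquad
E(\rho) = \frac{1}{\rho}\int_{1-\rho^2}^{1}\int_{B_\rho(y_0)}|\nb u^\lambda|^2\,dy\,ds,
}
are controlled on the order-one time interval $[0,1]$ by the small weighted data norm from the previous step; the standard interpolation inequality then bounds $\frac{1}{\rho^2}\int_{Q_\rho}|u^\lambda|^3$ by powers of $A(\rho)$ and $E(\rho)$. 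For the pressure I would use the representation in the definition: split $p^\lambda$ into the local Calder\'on--Zygmund part on $Q^{**}$, controlled by $\int|u^\lambda|^2$ (hence by $A,E$), and the far/tail part built from $K_{ij}(x-y)-K_{ij}(x_Q-y)$, whose $L^{3/2}$ oscillation over $B_\rho(y_0)$ is controlled by the convergent weighted sum over the remaining cubes, again small by the rescaled smallness and $q\le 1$. Combining, the displayed CKN integral for $(u^\lambda,p^\lambda)$ is $<\epsilon_*$ once $\eta$ is chosen small, so $\epsilon$-regularity yields smoothness of $u^\lambda$ on $Q_{\rho/2}(y_0,1)$, and undoing the scaling gives smoothness of $u$ near $(x_0,t_0)$. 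Since $\tau$ and the thresholds were chosen independently of $(x_0,t_0)$ in the region, $u$ is smooth on all of $\{t>\max\{\delta|x|^2,\tau\}\}$.

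The main obstacle I anticipate is the passage from small data to small solution, made uniform and quantitative over the whole region rather than at a single point. The stated hypothesis only gives finiteness of the weighted energy--dissipation for each fixed $t$, so one must upgrade it to a local energy inequality for the shifted norms $F^{q,n}_r$ with a gain that is small when the data is small on the corresponding far cubes; this is the genuinely delicate estimate, and it is where the absorption of the nonlinear and pressure tail terms on the order-one rescaled time interval must be carried out carefully. The pressure tail, in particular, couples all scales through the kernel difference and must be shown to remain small uniformly in $(x_0,t_0)$; this is the technical heart of the argument, the spatial decay afforded by $q\le 1$ and the convergence of the $F^q_r$ sum being precisely what makes it work.
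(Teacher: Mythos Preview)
Your proposal is correct and is the same proof as the paper's, phrased through an explicit parabolic rescaling rather than through the shifted cube families $\mathcal{C}_N$ and norms $F^{q,N}_r$. The paper fixes a large $N$, works directly on the central cube $Q=(-2^N,2^N)^3\in\mathcal{C}_N$, and shows that the CKN quantity $\frac{1}{|Q|^{2/3}}\int_0^T\int_Q(|u|^3+|p-(p)_Q|^{3/2})$ is controlled by $\|u_0\|_{F^{q,N}_r}$, which is small for $N$ large by Lemma~\ref{5intialdata}; your rescaling $\lambda=\sqrt{t_0}\sim 2^N$ is exactly the change of variables that converts this into your unit-scale statement, and your observation $\lambda^{q-1}\le1$ is the rescaled form of the exponent bookkeeping the paper does for $q\le1$ in Remark~\ref{5timeT} and in Section~\ref{5regularsection}.

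The step you flag as the main obstacle --- passing from small data to a small solution uniformly on the relevant time interval, with the pressure tail absorbed --- is precisely what the paper carries out via Lemma~\ref{5aprrox} (the a~priori inequality for $\|u(\cdot,t)\|_{F^{q,N}_r}^r$ coming from the local energy inequality and the pressure bound of Lemma~\ref{5pressure}) together with the Gr\"onwall-type Lemma~\ref{5Gronwall}; Remark~\ref{5timeT} shows that for $q\le1$ the resulting time of validity is $\sim 2^{2N}$, i.e.\ order one after your rescaling. So your outline is accurate, and the ``genuinely delicate estimate'' you anticipate is exactly Lemma~\ref{5aprrox}. One small imprecision: the cubes of $\mathcal{C}$ meeting $B_\rho(y_0)$ need not have side $\sim1$ when $|y_0|$ is as large as $\delta^{-1/2}$; their side is $\lesssim C(\delta)$, which is all the argument requires.
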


The paper organized as follows. In Section \ref{5pressuresection} we prove a bound for pressure term, using this bound in Section \ref{5leisection} we establish a priori estimate for the solution $u$ in spaces $F^{q,m}_r$. In Section \ref{5GEsection} we prove global existence of local energy solutions and in Section \ref{5regularsection} we prove Theorem \ref{5Regularity}.

\medskip\noindent
{\bf Acknowledgement.} The research of Misha Chernobai was partially supported by Natural Sciences and Engineering Research Council of Canada (NSERC) grants RGPIN-2018-04137 and RGPIN-2023-04534. Author expresses gratitude to Professor Tai-Peng Tsai for suggesting the statement of the problem.

\section{Pressure estimate}\label{5pressuresection}
We use a similar representation of pressure as in \cite{BKT}, in this Section we fix some integer $N$ and family of cubes corresponding to the space $F^{q,N}_r$, take $Q$ as one of the cubes in this family:
$$
p(x,t)-p_Q(t)=(G_{ij}^Qu_iu_j)(x,t)~,\forall x\in Q^*,
$$
Where
\EQ{\label{Gij}\gathered
G_{ij}f(x)=-\frac13\delta_{ij}f(x)+\textrm{p.v.}\int_{y\in Q^{**}}K_{ij}(x-y)f(y)~dy+
\\
+\int_{y\notin Q^{**}}(K_{ij}(x-y)-K_{ij}(x_Q-y))f(y)~dy
\endgathered}
The following lemma will be the estimate of pressure in spaces $F^{q,N}_r$

\begin{lemma}\label{5pressure}
Let $q\le 2$ and $(u,p)$ be a local energy solution and associated pressure on $\Bbb R^3\times[0,T]$. Then for any integer $N$ and cube $Q$ in a family of cubes corresponding to the space $F^{q,N}_r$ we have the following estimate:
\EQ{\gathered
\frac{1}{|Q|^{\frac{q+1}{3}}}\int_0^t\int_Q |p(x,t)-(p)_{Q(t)}|^{\frac32}~dx~ds\le
\\
\frac{C}{|Q|^{\frac{5-q}{6}}}\int_0^t\Big(\sum_{Q'\subset (Q^{**})^{c}}\Big(\frac{1}{|Q'|^{\frac{q}{3}}}\int_{Q'}|u|^2~dy\Big)^{\frac{r}2}\Big)^{\frac{3}{r}}~ds+\frac{C}{|Q|^{\frac{q+1}3}}\int_0^t\int_{Q^{**}}|u|^3~dx~ds,
\endgathered}
where $Q^{**}$ is a cube of radius $\frac{5}{4}|Q|^{\frac{1}{3}}$ and the same center as $Q$.
\end{lemma}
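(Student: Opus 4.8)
The plan is to split the pressure into its local (near-field) and non-local (far-field) contributions using the representation \eqref{Gij}, and estimate each separately, converting the far-field piece from the $L^\infty$-type Calder\'on--Zygmund bound on $Q^*$ into the $F^{q,N}_r$-weighted sum appearing on the right-hand side. The key structural fact I would exploit is the specific form of $G_{ij}$: the singular integral over $Q^{**}$ is a genuine Calder\'on--Zygmund operator acting on $u_iu_j$, while the integral over $(Q^{**})^c$ has the crucial \emph{kernel difference} $K_{ij}(x-y)-K_{ij}(x_Q-y)$, which gains decay in $|y|$ because $K_{ij}$ is homogeneous of degree $-3$ and smooth away from the origin.

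First I would treat the near-field term. Since $G_{ij}^{(1)}f(x) = -\tfrac13\delta_{ij}f(x)+\mathrm{p.v.}\int_{Q^{**}}K_{ij}(x-y)f(y)\,dy$ is bounded on $L^{3/2}$ with $f = u_iu_j$, I would apply the $L^{3/2}$-boundedness of the Riesz-transform-type operator to obtain $\|G_{ij}^{(1)}(u_iu_j)\|_{L^{3/2}(Q^*)}\lesssim \|u_iu_j\|_{L^{3/2}(Q^{**})} = \|u\|_{L^3(Q^{**})}^2$, then raise to the power $3/2$ and integrate in time; after normalizing by $|Q|^{-(q+1)/3}$ this produces exactly the second term on the right-hand side. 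This step is essentially routine once one records that $|u_iu_j|^{3/2}\le |u|^3$ pointwise.

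The heart of the matter is the far-field term $G_{ij}^{(2)}f(x)=\int_{(Q^{**})^c}(K_{ij}(x-y)-K_{ij}(x_Q-y))f(y)\,dy$. For $x\in Q^*$ the mean-value theorem gives $|K_{ij}(x-y)-K_{ij}(x_Q-y)|\lesssim |x-x_Q|\,|\nabla K_{ij}(\zeta-y)|\lesssim |Q|^{1/3}\,|y-x_Q|^{-4}$ for $y$ outside $Q^{**}$, using that $K_{ij}$ is homogeneous of degree $-3$ so $\nabla K_{ij}$ is of degree $-4$ and that $|x-y|\approx |x_Q-y|$ there. I would then decompose $(Q^{**})^c$ into the dyadic cubes $Q'$ of the family $\mathcal{C}_N$, on each of which $|y-x_Q|^{-4}$ is comparable to a fixed value governed by $\max(|Q|^{1/3},|Q'|^{1/3})$, and apply Cauchy--Schwarz on each $Q'$ to bound $\int_{Q'}|u|^2$. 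The geometric properties of $\mathcal{C}$ listed in the introduction (side-length proportional to distance from the origin, controlled number of smaller cubes) are what make the resulting sum over $Q'$ summable. Finally I would insert the weights $|Q'|^{-q/3}$ to reconstruct the $F^{q,N}_r$ quantities, applying H\"older's inequality in the sequence index with exponents $r/2$ and its conjugate to pass to the $\bigl(\sum_{Q'}(\cdots)^{r/2}\bigr)^{2/r}$ form (raised to the $3/2$ power in the integral, giving the exponent $3/r$ shown), and track how the powers of $|Q|$ combine to yield the prefactor $|Q|^{-(5-q)/6}$.

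\textbf{The main obstacle} I anticipate is the bookkeeping of the powers of $|Q|$ and $|Q'|$ in the far-field estimate: one must simultaneously extract the $|Q|^{1/3}$ gain from the kernel difference, the decay $|y-x_Q|^{-4}$ summed against the weighted volumes of the $Q'$, and the normalizing weight $|Q|^{-(q+1)/3}$, and verify that after the H\"older step in the sequence space (which costs a factor depending on the geometric series $\sum |Q'|^{\text{something}}$, finite precisely because $q\le 2$) everything collapses to the claimed power $(5-q)/6$. The delicate point is that the sum over $Q'$ of the kernel weights must converge uniformly in $N$ and $Q$, which relies essentially on the four listed properties of $\mathcal{C}$ rather than on any single scale; getting the exponent on $|Q'|$ inside the summation to match the conjugate H\"older exponent $r/(r-2)$ while keeping the series convergent is where the constraint $q\le 2$ and the structure of $\mathcal{C}$ must be used carefully.
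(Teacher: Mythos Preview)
Your proposal is correct and follows essentially the same route as the paper: the near/far split of $G_{ij}$, Calder\'on--Zygmund $L^{3/2}$-boundedness for the near part, the mean-value bound $|K_{ij}(x-y)-K_{ij}(x_Q-y)|\lesssim |Q|^{1/3}|y-x_Q|^{-4}$ for the far part, decomposition of $(Q^{**})^c$ into the cubes $Q'\in\mathcal{C}_N$, and H\"older in the sequence index with exponents $r/2$ and $r/(r-2)$ to produce the $\bigl(\sum_{Q'}(\cdots)^{r/2}\bigr)^{2/r}$ factor and the prefactor $|Q|^{q/3-1}$, which after taking the $3/2$ power and normalizing by $|Q|^{-(q+1)/3}$ gives the exponent $(5-q)/6$. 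The paper makes the dichotomy you encode via $\max(|Q|^{1/3},|Q'|^{1/3})$ explicit by splitting into $S_1=\{|Q'|\le 8|Q|\}$ (finitely many cubes, $|x-y|\approx |Q|^{1/3}$) and $S_2=\{|Q'|>8|Q|\}$ ($|x-y|\approx |Q'|^{1/3}$); one minor correction is that the geometric series $\sum_{Q'\in S_2}|Q'|^{-(4-q)r/(3(r-2))}$ actually converges for all $q<4$ and $r>2$, so the hypothesis $q\le 2$ is not what makes that sum finite.
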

\begin{proof}

We separate $G_{ij}$ from \eqref{Gij} into two terms:
 $$
 I_{near}=-\frac13\delta_{ij}f(x)+\textrm{p.v.}\int_{y\in Q^{**}}K_{ij}(x-y)f(y)~dy,
  $$
  $$
  I_{far}=\int_{y\notin Q^{**}}(K_{ij}(x-y)-K_{ij}(x_Q-y))f(y)~dy.
 $$
Next we apply Calderon-Zigmund estimates for the term $I_{near}$:
$$
\frac{1}{|Q|^{\frac13}}\int_0^t\int_{Q^*}|I_{\textrm{near}}|^{\frac32}~dx~ds\le \frac{C}{|Q|^{\frac13}}\int_0^t\int_{Q^{**}}|u|^3~dx~ds.
$$
This term is similar to the one we will have in local energy inequality and will be estimated in the next Section. Now we will estimate the other term  $I_{\textrm{far}}$:
\EQ{\gathered
I_{\textrm{far}}\le C |Q|^{\frac{1}{3}}\int_{\Bbb R^3\setminus Q^{**}}\frac{1}{|x-y|^4}|u|^2~dy\le
\\
\le C\sum_{Q'\in S_1}|Q|^{\frac{1}{3}}\int_{Q'\cap (Q^{**})^{c}}\frac{1}{|x-y|^4}|u|^2~dy+ C\sum_{Q'\in S_2}|Q|^{\frac{1}{3}}\int_{Q'\cap (Q^{**})^{c}}\frac{1}{|x-y|^4}|u|^2~dy
\endgathered}
Where $S_1$ is the subset of cubes with $|Q'|\le 8|Q|$ and $S_2$ are cubes with $|Q'|>8|Q|$. There is only finite amount of cubes in $S_1$ and we can also assume for them that $|Q|^{\frac{1}{3}}\approx |x_Q-x_{Q'}|$ due to construction of our family of cubes $\mathcal{C}$. Moreover, if $x\in Q$ and $y\in Q',~Q'\in S_1$ then $|x-y|\approx |x_Q-x_{Q'}|\approx |Q|^{\frac{1}{3}}$. Hence,
\EQ{\gathered
\sum_{Q'\in S_1}|Q|^{\frac{1}{3}}\int_{Q'\cap (Q^{**})^{c}}\frac{1}{|x-y|^4}|u|^2~dy\le \sum_{Q'\in S_1}\frac{1}{|Q|}\int_{Q'\cap (Q^{**})^{c}}|u|^2~dy\le
\\
\le \frac{1}{|Q|}\Big(\sum_{Q'\in S_1}\Big(\frac{1}{|Q'|^{\frac{q}{3}}}\int_{Q'}|u|^2~dy\Big)^{\frac{r}{2}}\Big)^{\frac{2}{r}}\Big(\sum_{Q'\in S_1}|Q'|^{\frac{qr}{3(r-2)}}\Big)^{\frac{r-2}{r}}\le
\\
\le \frac{C}{|Q|^{1-\frac{q}{3}}}\Big(\sum_{Q'\in S_1}\Big(\frac{1}{|Q'|^{\frac{q}{3}}}\int_{Q'}|u|^2~dy\Big)^{\frac{r}{2}}\Big)^{\frac{2}{r}}
\endgathered}
On the other hand for $Q'\in S_2$ we have a different relation for $x\in Q,y\in Q':~|x-y|\approx |Q'|^{\frac{1}{3}}$
\EQ{\gathered
 \sum_{Q'\in S_2} |Q|^{\frac{1}{3}}\int_{Q'}\frac{1}{|x-y|^4}|u|^2~dy
\le C |Q|^{\frac{1}{3}}\sum_{Q'\in S_2}\frac{1}{|Q'|^{\frac{4}{3}}}\int_{Q'}|u|^2~dy\le
\\ \le C |Q|^{\frac{1}{3}}\Big(\sum_{Q'\in S_2}\Big(\frac{1}{|Q'|^{\frac{q}{3}}}\int_{Q'}|u|^2~dy\Big)^{\frac{r}2}\Big)^{\frac{2}{r}}\cdot\Big(\sum_{Q'\in S_2}\Big(\frac{1}{|Q'|^{\frac{4-q}{3}}}\Big)^{\frac{r}{r-2}}\Big)^{\frac{r-2}{r}}
\endgathered}
The last term in the product is finite only for $r>2$ and $q<4$ , since all of our cubes have radius greater than $|Q|^{\frac{1}{3}}$ we can calculate the multiplier, denote $M=\log_2(|Q|^{\frac{1}{3}})$:
\EQ{\gathered
\sum_{Q'\in S_2}\Big(\frac{1}{|Q'|^{\frac{4-q}{3}}}\Big)^{\frac{r}{r-2}}=C\sum_{n\ge M} 2^{-\frac{n(4-q)r}{3(r-2)}}=C\frac{2^{-\frac{M(4-q)r}{3(r-2)}}}{1-2^{-\frac{(4-q)r}{3(r-2)}}}=C(q,r)|Q|^{-\frac{(4-q)r}{3(r-2)}}
\endgathered}
Therefore, we get the estimate for $I_{\textrm{far}}$
\EQ{\gathered
I_{\textrm{far}}\le C |Q|^{\frac{1}{3}-\frac{(4-q)}{3}}\Big(\sum_{Q'\in S_2}\Big(\frac{1}{|Q'|^{\frac{q}{3}}}\int_{Q'}|u|^2~dy\Big)^{\frac{r}2}\Big)^{\frac{2}{r}}+
\\
\frac{C}{|Q|^{1-\frac{q}{3}}}\Big(\sum_{Q'\in S_1}\Big(\frac{1}{|Q'|^{\frac{q}{3}}}\int_{Q'}|u|^2~dy\Big)^{\frac{r}{2}}\Big)^{\frac{2}{r}}\le
\\
C |Q|^{\frac{q}{3}-1}\Big(\sum_{Q'\subset (Q^{**})^{c}}\Big(\frac{1}{|Q'|^{\frac{q}{3}}}\int_{Q'}|u|^2~dy\Big)^{\frac{r}2}\Big)^{\frac{2}{r}}.
\endgathered}
Take any cube $Q$ in family of cubes corresponding to space $F^{q,N}_r$, and we get the following estimate for $I_{\textrm{far}}$
\EQ{\gathered
\frac{1}{|Q|^{\frac{q+1}{3}}}\int_0^t\int_Q |I_{\textrm{far}}|^{\frac32}~dx~ds\le
\\
\frac{C}{|Q|^{\frac{q+1}{3}+\frac{3-q}{2}-1}}\int_0^t\Big(\sum_{Q'\subset (Q^{**})^{c}}\Big(\frac{1}{|Q'|^{\frac{q}{3}}}\int_{Q'}|u|^2~dy\Big)^{\frac{r}2}\Big)^{\frac{3}{r}}~ds=
\\
\frac{C}{|Q|^{\frac{5-q}{6}}}\int_0^t\Big(\sum_{Q'\subset (Q^{**})^{c}}\Big(\frac{1}{|Q'|^{\frac{q}{3}}}\int_{Q'}|u|^2~dy\Big)^{\frac{r}2}\Big)^{\frac{3}{r}}~ds.
\endgathered}
Combining the estimates for $I_{\textrm{far}}$ and $I_{\textrm{near}}$ we finish the proof of Lemma \eqref{5pressure}.
\end{proof}

\section{Local Energy Inequality}\label{5leisection}
In the Section we will prove a priori bound for solutions of \eqref{5NS} in spaces $F^{q,N}_r$.
\begin{lemma}\label{5aprrox}
  Assume $u_0\in F^q_r$ is divergence free and let $(u,p)$ be a local energy solution with initial data $u_0$ on $\Bbb R^3\times (0,T)$. Let us assume that for some $N$ we have the following
  \EQ{
  \|u(\cdot,t)\|_{F_{r}^{q,N}}^r+\sum_{n=1}^{\infty}\Big(\frac{1}{|Q_n|^{\frac{q}{3}}}\int_0^t\int_{Q_n}|\nabla u|^2~dx~dt\Big)^{\frac{r}2}<\infty. ~\forall t<T.
  }
  Then for all $t\in (0,T)$ we have an a priori estimate
  \EQ{
\gathered
\|u(\cdot,t)\|_{F_{r}^{q,N}}^r+\sum_{n=1}^{\infty}\Big(\frac{1}{|Q_n|^{\frac{q}{3}}}\int_0^t\int_{Q_n}|\nabla u|^2~dx~dt\Big)^{\frac{r}2}
\le  \|u(\cdot,0)\|_{F_r^{q,N}}^r +
\\
\frac{C_0t^{\frac{r-2}{2}}}{2^{rN}}\int_0^t\|u(\cdot,s)\|^r_{F_r^{q,N}}~ds+\frac{C_1t^{\frac{r-2}{2}}}{2^{rN(2-q)}}\int_0^t\|u(\cdot,s)\|^{3r}_{F_r^{q,N}}~ds
\endgathered }
Where $k_1,k_2,C_0,C_1$ are global constants that dont depend on $N$ and $Q_n$ are cubes of the family $\mathcal{C}$ corresponding to the space $F^{q,N}_r$.
\end{lemma}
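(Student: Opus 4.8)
The plan is to run the cube-by-cube local-energy machinery of \cite{BKT}, but weighted according to the $F^{q,N}_r$ structure. For each cube $Q$ of the family $\mathcal C_N$ I would test the Caffarelli--Kohn--Nirenberg inequality from the definition of a local energy solution against $\xi(x,s)=\phi_Q(x)^2\psi(s)$, where $\phi_Q\in C_0^\infty$ satisfies $\phi_Q\equiv1$ on $Q$, $\supp\phi_Q\subset Q^*$, $|\nb\phi_Q|\lec|Q|^{-1/3}$, $|\nb^2\phi_Q|\lec|Q|^{-2/3}$, and $\psi\nearrow\mathbf{1}_{(0,t)}$. Subtracting the cube constant $p_Q(t)$ from the pressure (legitimate since $\int u\cdot\nb(\phi_Q^2)=0$ by $\div u=0$) and letting $\psi\nearrow\mathbf{1}_{(0,t)}$, the $L^2_{loc}$ attainment of the initial data gives the generalized energy inequality
\[
\int_Q|u(t)|^2+2\int_0^t\!\!\int_Q|\nb u|^2\lec\int_{Q^*}|u_0|^2+\frac{1}{|Q|^{2/3}}\int_0^t\!\!\int_{Q^*}|u|^2+\frac{1}{|Q|^{1/3}}\int_0^t\!\!\int_{Q^*}\big(|u|^3+|p-p_Q|\,|u|\big).
\]
Dividing by $|Q|^{q/3}$ converts the left side into $E_Q(t)+2D_Q(t)$, with $E_Q(t)=|Q|^{-q/3}\int_Q|u(t)|^2$ and $D_Q(t)=|Q|^{-q/3}\int_0^t\int_Q|\nb u|^2$, so that raising to the power $r/2$ and summing over $Q\in\mathcal C_N$ reconstructs $\|u(t)\|_{F^{q,N}_r}^r$ together with the dissipation sum on the left. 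The subadditivity $(a+b)^{r/2}\lec a^{r/2}+b^{r/2}$ then splits the right side into initial, diffusion, transport, and pressure contributions.

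The two linear contributions are routine. For the initial term I would use that each $Q^*$ meets only boundedly many cubes of comparable volume, so that after summation it is controlled by $\|u_0\|_{F^{q,N}_r}^r$. For the diffusion term I would apply Hölder in time to produce the factor $t^{(r-2)/2}$, bound $(\sum_{Q'\sim Q}E_{Q'})^{r/2}\lec\sum_{Q'\sim Q}E_{Q'}^{r/2}$, and interchange the order of summation; since every cube of $\mathcal C_N$ has $|Q|^{1/3}\ge 2^N$, the residual weight $|Q|^{-r/3}\le 2^{-rN}$ factors out and yields exactly the $2^{-rN}\int_0^t\|u\|_{F^{q,N}_r}^r$ term.

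The transport and pressure terms are the heart of the proof. For $\int_{Q^*}|u|^3$ I would use the scaling-correct Gagliardo--Nirenberg inequality
\[
\int_{Q^*}|u|^3\lec\Big(\int_{Q^*}|u|^2\Big)^{3/4}\Big(\int_{Q^*}|\nb u|^2\Big)^{3/4}+|Q|^{-1/2}\Big(\int_{Q^*}|u|^2\Big)^{3/2},
\]
followed by Young's inequality on the leading summand with a small parameter $\delta$, arranged so that the gradient factor appears to the first power with weight matching $D_Q$; after summation and the finite-overlap property this piece is absorbed into the left-hand dissipation once $\delta$ is small. The energy remainder carries the constant prefactor $|Q|^{(2q-4)/3}$; raising the time-integrated bound to the power $r/2$, applying Hölder in time, interchanging the sums, and using the embedding $\sum_{Q'}E_{Q'}^{3r/2}\le(\sum_{Q'}E_{Q'}^{r/2})^3=\|u\|_{F^{q,N}_r}^{3r}$, the surviving weight $\big(|Q|^{(2q-4)/3}\big)^{r/2}\le 2^{-rN(2-q)}$ (here $q\le 2$ is exactly what makes $2q-4\le 0$) produces the stated $2^{-rN(2-q)}\int_0^t\|u\|_{F^{q,N}_r}^{3r}$ term. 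For the pressure I would write $\int_{Q^*}|p-p_Q|\,|u|\le\|p-p_Q\|_{L^{3/2}(Q^*)}\|u\|_{L^3(Q^*)}$, apply Young to separate an $|u|^3$ piece (handled as above) from a $|p-p_Q|^{3/2}$ piece, and then invoke Lemma~\ref{5pressure}, whose far-field part contributes $|Q|^{-(5-q)/2}\int_0^t\|u\|_{F^{q,N}_r}^3$ and whose near part is again of $|u|^3$ type.

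The main obstacle is the $N$-dependent bookkeeping of these nonlinear pieces. Tracing the exponents, the far-field pressure contribution and the lower-order Gagliardo--Nirenberg term both emerge naturally as $\|u\|_{F^{q,N}_r}^{3r/2}$ rather than as one of the two powers in the statement; I would reduce them by the elementary interpolation $\|u\|^{3r/2}\le\tfrac34\|u\|^{r}+\tfrac14\|u\|^{3r}$ and then check that their $N$-weights are dominated by $C_0\,2^{-rN}$ and $C_1\,2^{-rN(2-q)}$. It is precisely this verification --- which forces the Young exponents and the cube weights to be balanced exactly, uses $q\le2$ (so $2q-4\le0$) and $q<4$ (so the far-field geometric series of Lemma~\ref{5pressure} converges), and relies throughout on the uniform finite overlap and proportional-volume properties of $\mathcal C$ --- that constitutes the delicate core of the argument; the remaining manipulations (Hölder in time, interchange of the $\ell^{r/2}$ cube sum with neighbour sums) are mechanical.
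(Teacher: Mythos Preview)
Your proposal is correct and follows essentially the same route as the paper: test the local energy inequality on each cube $Q\in\mathcal C_N$ with a cutoff supported in $Q^*$, divide by $|Q|^{q/3}$, raise to the power $r/2$, sum using finite overlap, absorb the small gradient piece from Gagliardo--Nirenberg/Young, invoke Lemma~\ref{5pressure} for the pressure, apply H\"older in time for the $t^{(r-2)/2}$ factor, replace $|Q|$ by $2^{3N}$ in the negative weights, use $\sum E_Q^{\alpha r/2}\le(\sum E_Q^{r/2})^\alpha$ for $\alpha=3,\tfrac32$, and finally collapse the $\|u\|^{3r/2}$ contributions into the $\|u\|^r$ and $\|u\|^{3r}$ terms by Young. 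The only cosmetic discrepancy is the far-field pressure weight, which after the $|Q|^{-(q+1)/3}$ normalisation of Lemma~\ref{5pressure} is $|Q|^{-(5-q)/6}$ (hence $|Q|^{r(q-5)/12}$ once raised to $r/2$), not $|Q|^{-(5-q)/2}$; this does not affect the argument since you only need it to be dominated by $2^{-rN(2-q)}$ and $2^{-rN}$ after Young.
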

\begin{proof}
Similarly to previous Section we choose arbitrary integer $N$ and denote $Q_n$ as one of the cubes in the set corresponding to the space $F^{q,N}_r$. The solution satisfies the local energy inequality for any compactly supported positive test function $\phi\le 1$ such that $\phi=1$ on $Q_n$, $\supp\{\phi\}\subset Q_n^*$ and $t\le T$:
\EQ{\label{5LEI}
\gathered
\int_{Q_n}|u|^2(x,t)~dx+\int_0^t\int_{Q_n}|\nabla u|^2~dx~dt\le \int \phi(x,0)|u(x,0)|^2~dx+
\\
\int\int_{Q_n}|u|^2\Delta\phi~dx~dt+\int\int |u|^2u\cdot\nabla\phi + 2(p-(p)_{Q_n})u\cdot\nabla \phi~dx~dt.
\endgathered }
Dividing by $|Q_n|^{\frac{q}3}$ and taking to the power $\frac{r}2$ we proceed to:
\EQ{
\gathered
\Big(\frac{1}{|Q_n|^{\frac{q}{3}}}\int_{Q_n}|u|^2(x,t)~dx\Big)^{\frac{r}2}+\Big(\frac{1}{|Q_n|^{\frac{q}{3}}}\int_0^t\int_{Q_n}|\nabla u|^2~dx~dt\Big)^{\frac{r}2}\le
\\
C\Big(\frac{1}{|Q_n|^{\frac{q}{3}}}\int \phi(x,0)|u(x,0)|^2~dx\Big)^{\frac{r}2}+C\Big(\frac{1}{|Q_n|^{\frac{q}{3}}}\int_0^t\int |u|^2\Delta\phi~dx~dt\Big)^{\frac{r}2}+
\\
C\Big(\frac{1}{|Q_n|^{\frac{q}{3}}}\int_0^t\int |u|^2u\cdot\nabla\phi + 2(p-(p))u\cdot\nabla \phi~dx~dt\Big)^{\frac{r}2}\le
\\
C\Big(\frac{1}{|Q_n|^{\frac{q}{3}}}\int_{Q_n^*}|u(x,0)|^2~dx\Big)^{\frac{r}2}+C\Big(\frac{1}{|Q_n|^{\frac{q+2}{3}}}\int_0^t\int_{Q^*_n}|u|^2~dx~dt\Big)^{\frac{r}2}+
\\
C\Big(\frac{1}{|Q_n|^{\frac{q+1}{3}}}\int_0^t\int_{Q_n^*} |u|^3~dx~dt\Big)^{\frac{r}2} +C\Big(\frac{1}{|Q_n|^{\frac{q+1}{3}}}\int_0^t\int |p-(p)_{Q_n}|^{\frac32}~dx~dt\Big)^{\frac{r}2}.
\endgathered }
Here $C=C(r)$ is a uniform constant. For the non-linear term we apply Gagliardo-Nirenberg, H\"older and Young inequalities:
\EQ{ \label{5convectiveterm}\gathered
\frac{1}{|Q_n|^{\frac13}}\int_0^t\int_{Q_n}|u|^3~dx~dt\le C(\varepsilon)|Q_n|^{q-\frac43}\int_0^t\Big(\frac{1}{|Q_n|^{\frac{q}{3}}}\int_{Q_n}|u|^2~dx\Big)^3~dt+
\\
\varepsilon\int_0^t\int_{Q_n}|\nabla u|^2~dx~ds+C|Q_n|^{\frac{q}{2}-\frac{5}{6}}\int_0^t\Big(\frac{1}{|Q_n|^{\frac{q}{3}}}\int_{Q_n}|u|^2~dx\Big)^{\frac{3}{2}}~ds
\endgathered}
Plugging the above estimate together with pressure estimate in the equation \eqref{5LEI} we get
\EQ{
\gathered
\Big(\frac{1}{|Q_n|^{\frac{q}{3}}}\int_{Q_n}|u|^2(x,t)~dx\Big)^{\frac{r}2}+\Big(\frac{1}{|Q_n|^{\frac{q}{3}}}\int_0^t\int_{Q_n}|\nabla u|^2~dx~dt\Big)^{\frac{r}2}\le
\\
\le \Big(\frac{1}{|Q_n|^{\frac{q}{3}}}\int_{Q_n^*}|u(x,0)|^2~dx\Big)^{\frac{r}2}+\Big(\frac{1}{|Q_n|^{\frac{q+2}{3}}}\int_0^t\int_{Q^*_n}|u|^2~dx~ds\Big)^{\frac{r}2}+
\\
+\Big(C(\varepsilon)|Q_n|^{\frac{2q-4}{3}}\int_0^t\Big(\frac{1}{|Q_n|^{\frac{q}{3}}}\int_{Q_n^{**}} |u|^2~dx\Big)^3~ds\Big)^{\frac{r}2}+
\\
\varepsilon^{\frac{r}2}\Big(\frac{1}{|Q_n|^{\frac{q}{3}}}\int_0^t\int_{Q_n^{**}}|\nabla u|^2~dx~ds\Big)^{\frac{r}2}+
\\
+\Big(|Q_n|^{\frac{q-5}{6}}\int_0^t\Big(\frac{1}{|Q_n|^{\frac{q}{3}}}\int_{Q_n^{**}} |u|^2~dx\Big)^{\frac32}~ds\Big)^{\frac{r}2}+
\\
\Big(\frac{C}{|Q_n|^{\frac{5-q}{6}}}\int_0^t\Big(\sum_{Q'\subset (Q^{**})^{c}}\Big(\frac{1}{|Q'|^{\frac{q}{3}}}\int_{Q'}|u|^2~dy\Big)^{\frac{r}2}\Big)^{\frac{3}{r}}~ds\Big)^{\frac{r}2}
\endgathered }
Next we use Holder inequality in time:
\EQ{
\gathered
\Big(\frac{1}{|Q_n|^{\frac{q}{3}}}\int_{Q_n}|u|^2(x,t)~dx\Big)^{\frac{r}2}+\Big(\frac{1}{|Q_n|^{\frac{q}{3}}}\int_0^t\int_{Q_n}|\nabla u|^2~dx~dt\Big)^{\frac{r}2}\le
\\
\le \Big(\frac{1}{|Q_n|^{\frac{q}{3}}}\int_{Q_n^*}|u(x,0)|^2~dx\Big)^{\frac{r}2}+
t^{\frac{r-2}2}\int_0^t\Big(\frac{1}{|Q_n|^{\frac{q+2}{3}}}\int_{Q^*_n}|u|^2~dx\Big)^{\frac{r}2}~ds+
\\
+C(\varepsilon)|Q_n|^{\frac{r(q-2)}{3}}t^{\frac{r-2}2}\int_0^t\Big(\frac{1}{|Q_n|^{\frac{q}{3}}}\int_{Q_n^{**}} |u|^2~dx\Big)^{\frac{3r}2}~ds+
\\
\varepsilon^{\frac{r}2}\Big(\frac{1}{|Q_n|^{\frac{q}{3}}}\int_0^t\int_{Q_n^{**}}|\nabla u|^2~dx~ds\Big)^{\frac{r}2}+
\\
+|Q_n|^{\frac{r(q-5)}{12}}t^{\frac{r-2}{2}}\int_0^t\Big(\frac{1}{|Q_n|^{\frac{q}{3}}}\int_{Q_n^{**}} |u|^2~dx\Big)^{\frac{3r}{4}}~ds+
\\
\frac{C}{|Q_n|^{\frac{r(5-q)}{12}}}t^{\frac{r-2}{2}}\int_0^t\Big(\sum_{Q'\in \mathcal{C}}\Big(\frac{1}{|Q'|^{\frac{q}{3}}}\int_{Q'}|u|^2~dy\Big)^{\frac{r}2}\Big)^{\frac{3}{2}}~ds
\endgathered }
All of the cubes $Q_n$ have radius at least $2^N$ and we substitute $|Q_n|$ by $2^{3N}$ in the following terms:
\EQ{\gathered
\Big(\frac{1}{|Q_n|^{\frac{q+2}{3}}}\int_{Q^*_n}|u|^2~dx\Big)^{\frac{r}2}\le \frac{C}{2^{rN}}\Big(\frac{1}{|Q_n|^{\frac{q}{3}}}\int_{Q^*_n}|u|^2~dx\Big)^{\frac{r}2},
\\
|Q_n|^{\frac{r(q-2)}{3}}t^{\frac{r-2}2}\int_0^t\Big(\frac{1}{|Q_n|^{\frac{q}{3}}}\int_{Q_n^{**}} |u|^2~dx\Big)^{\frac{3r}2}~ds\le
\\
\frac{C}{2^{rN(2-q)}}t^{\frac{r-2}2}\int_0^t\Big(\frac{1}{|Q_n|^{\frac{q}{3}}}\int_{Q_n^{**}} |u|^2~dx\Big)^{\frac{3r}2}~ds,
\\
|Q_n|^{\frac{r(q-5)}{12}}t^{\frac{r-2}{2}}\int_0^t\Big(\frac{1}{|Q_n|^{\frac{q}{3}}}\int_{Q_n^{**}} |u|^2~dx\Big)^{\frac{3r}{4}}~ds\le
\\
\frac{C}{2^{\frac{rN(5-q)}{4}}}t^{\frac{r-2}{2}}\int_0^t\Big(\frac{1}{|Q_n|^{\frac{q}{3}}}\int_{Q_n^{**}} |u|^2~dx\Big)^{\frac{3r}{4}}~ds.
\endgathered
}
Every $Q_n^{**}$ intersects with a fixed amount of cubes from the family $\mathcal{C}_N$, which is independent of $n$, therefore we can substitute integrals over $Q_n^{**}$ with sum of integrals over $Q_n'$ such that $Q_n'\cap Q_n^{**}\neq\emptyset$. Therefore after taking sum over all $Q_n$ we get the following estimate:
\EQ{\label{5LEI_1}
\gathered
\|u(\cdot,t)\|_{F_r^q}^r+\sum_{n=1}^{\infty}\Big(\frac{1}{|Q_n|^{\frac{q}{3}}}\int_0^t\int_{Q_n}|\nabla u|^2~dx~dt\Big)^{\frac{r}2}\le \|u(\cdot,0)\|_{F_r^q}^r+
\\
\frac{C}{2^{rN}}t^{\frac{r-2}{2}}\int_0^t\|u(\cdot,s)\|^r_{F_r^q}~ds+C(\varepsilon)\frac{C}{2^{rN(2-q)}}t^{\frac{r-2}{2}}\int_0^t\sum_{n=1}^{\infty}
\Big(\frac{1}{|Q_n|^{\frac{q}{3}}}\int_{Q_n} |u|^2~dx\Big)^{\frac{3r}2}~ds+
\\
+C\varepsilon^{\frac{r}2}\sum_{n=1}^{\infty}\Big(\frac{1}{|Q_n|^{\frac{q}{3}}}\int_0^t\int_{Q_n}|\nabla u|^2~dx~ds\Big)^{\frac{r}2}+
\\
\frac{C}{2^{\frac{rN(5-q)}{4}}}\sum_{n=1}^{\infty}t^{\frac{r-2}{2}}\int_0^t\Big(\frac{1}{|Q_n|^{\frac{q}{3}}}\int_{Q_n^{**}} |u|^2~dx\Big)^{\frac{3r}{4}}~ds+
\\
\sum_{n=1}^{\infty}\frac{C}{|Q_n|^{\frac{r(5-q)}{12}}}t^{\frac{r-2}{2}}\int_0^t\Big(\sum_{Q'\in \mathcal{C}}\Big(\frac{1}{|Q'|^{\frac{q}{3}}}\int_{Q'}|u|^2~dy\Big)^{\frac{r}2}\Big)^{\frac{3}{2}}~ds.
\endgathered }
Next we choose $\varepsilon$ sufficiently small to cancel the gradient term and apply the following algebraic inequalities:
\EQ{
\sum_{n=1}^{\infty}\Big(\frac{1}{|Q_n|^{\frac{q}{3}}}\int_{Q_n} |u|^2~dx\Big)^{\frac{\alpha r}2}\le\Big(\sum_{n=1}^{\infty}\Big(\frac{1}{|Q_n|^{\frac{q}{3}}}\int_{Q_n} |u|^2~dx\Big)^{\frac{r}2}\Big)^{\alpha}
}
Where $\alpha$ equals to $3$ or $\frac32$, we can also calculate the multiplier $\sum_{n=1}^{\infty}\frac{C}{|Q_n|^{\frac{r(5-q)}{12}}}\approx \frac{C}{2^{\frac{rN(5-q)}{4}}}$ and plugging it into \eqref{5LEI_1} we get:
\EQ{
\gathered
\|u(\cdot,t)\|_{F_{r}^{q,N}}^r+\sum_{n=1}^{\infty}\Big(\frac{1}{|Q_n|^{\frac{q}{3}}}\int_0^t\int_{Q_n}|\nabla u|^2~dx~dt\Big)^{\frac{r}2}\le
\\
\|u(\cdot,0)\|_{F_r^{q,N}}^r+\frac{C}{2^{rN}}t^{\frac{r-2}{2}}\int_0^t\|u(\cdot,s)\|^r_{F_r^{q,N}}~ds+
\\
Ct^{\frac{r-2}{2}}\int_0^t\frac{1}{2^{rN(2-q)}}\|u(\cdot,s)\|^{3r}_{F_r^{q,N}}+\frac{1}{2^{\frac{rN(5-q)}{4}}}\|u(\cdot,s)\|^{\frac{3r}2}_{F_r^q}~ds\le  \|u(\cdot,0)\|_{F_r^{q,N}}^r+
\\
\frac{C_0t^{\frac{r-2}{2}}}{2^{rN}}\int_0^t\|u(\cdot,s)\|^r_{F_r^{q,N}}~ds+\frac{C_1t^{\frac{r-2}{2}}}{2^{rN(2-q)}}\int_0^t\|u(\cdot,s)\|^{3r}_{F_r^{q,N}}~ds.
\endgathered }
In the last line we also used Young inequality and $C_0,C_1$ are global constants.
\end{proof}

\section{Global Existence}\label{5GEsection}

The idea of the proof relies on getting a-priori estimates with help of a variation of Gronwall which we will apply to the norm of $u$ in spaces $F^{q,m}_r$. We use the same lemma as in \cite{BKT}:
\begin{lemma}\label{5Gronwall}
  Suppose $f(t)\in L^{\infty}([0,T];[0,\infty))$ satisfies, for some $m\ge 1$,
  \EQ{
  f(t)\le a+\int_{0}^{t}(b_1f(s)+b_2f(s)^m)~ds~~\text{for}~ 0<t<T,
  }
  where $a,b_1,b_2\ge 0$ then for $T_0=\min(T,T_1)$, with
  $$
  T_1=\frac{a}{2ab_1+(2a)^mb_2}
  $$
  we have $f(t)\le 2a$ for $t\in(0,T_0).$
\end{lemma}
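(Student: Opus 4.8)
The plan is to reduce the integral inequality to a continuity (bootstrap) argument for a continuous majorant of $f$. First I would introduce
\[
g(t) = a + \int_0^t \bke{b_1 f(s) + b_2 f(s)^m}\,ds .
\]
Because $f \in L^\infty([0,T])$, the integrand $b_1 f + b_2 f^m$ is itself bounded and hence integrable, so $g$ is absolutely continuous and non-decreasing on $[0,T]$ with $g(0) = a$; moreover the hypothesis reads exactly $f(t) \le g(t)$ for $0 < t < T$. The whole claim is therefore reduced to showing that $g(t) \le 2a$ on $[0,T_0)$, since this forces $f(t) \le g(t) \le 2a$ there.

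Next I would set up the bootstrap time
\[
t^* = \sup\bket{\, t \in [0,T_0] : g(s) \le 2a \ \text{for all } s \in [0,t]\,}.
\]
Assuming $a>0$ (the degenerate case $a=0$ makes $T_0=0$ and the statement vacuous), continuity of $g$ together with $g(0) = a < 2a$ gives $t^* > 0$. I would then argue by contradiction, supposing $t^* < T_0 \le T_1$.

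On $[0,t^*)$ we have $f(s) \le g(s) \le 2a$, so substituting the crude bound $f(s)\le 2a$ back into the definition of $g$ and using that $s \mapsto b_1 s + b_2 s^m$ is non-decreasing on $[0,\infty)$ (here $b_1,b_2\ge 0$, $m\ge 1$) yields, for $t \le t^*$,
\[
g(t) \le a + \int_0^t \bke{b_1(2a) + b_2 (2a)^m}\,ds = a + t\bke{2ab_1 + (2a)^m b_2}.
\]
By the very definition of $T_1$, for every $t < T_1$ the second term is strictly less than $a$, whence $g(t) < 2a$. Letting $t \to t^{*-}$ and using continuity gives $g(t^*) < 2a$ since $t^* < T_1$; on the other hand, maximality of $t^*$ together with continuity forces $g(t^*) = 2a$, a contradiction. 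Hence $t^* \ge T_0$, so $g \le 2a$ on $[0,T_0)$ and therefore $f(t) \le 2a$ for $t \in (0,T_0)$.

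The argument is almost entirely routine; the only points deserving care are (i) confirming that $g$ is genuinely continuous even though $f$ is merely bounded — this is precisely where $f \in L^\infty$ is used, making the integral term Lipschitz in $t$ — and (ii) the edge cases where the denominator $2ab_1 + (2a)^m b_2$ vanishes (then $T_1 = +\infty$ and $f \le a \le 2a$ is immediate) or $a = 0$. I expect no serious obstacle: the essential mechanism is the self-improving structure of the bootstrap, in which feeding the weak bound $f \le 2a$ through the inequality reproduces the same bound with \emph{strict} inequality on $[0,T_1)$, which is exactly what prevents $g$ from attaining the threshold $2a$ before time $T_0$.
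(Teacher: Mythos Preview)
Your argument is correct: introducing the continuous majorant $g$, running a continuity/bootstrap argument on it, and feeding the bound $f\le 2a$ back into the integral inequality is exactly the right mechanism, and you handle the edge cases ($a=0$, vanishing denominator) appropriately. Note, however, that the paper does not supply its own proof of this lemma at all --- it simply quotes the statement and cites \cite{BKT} --- so there is nothing in the present paper to compare your proof against; your write-up would in fact fill a gap rather than duplicate existing material.
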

\begin{remark}\label{5timeT}
  In our case we will apply this lemma for arbitrary large $T$,$m=3$ and $f(t)=\|u(\cdot,t)\|^r_{F^{q,N}_r},~a=f(0),~b_1=\frac{C_0T^{\frac{r-2}{2}}}{2^{rN}}, b_2=\frac{C_1T^{\frac{r-2}{2}}}{2^{rN(2-q)}}$, which will guarantee the uniform apriori bound up to the time $\min\{T_1,T\}$, where
$$
T_1=\frac{1}{2\frac{C_0T^{\frac{r-2}{2}}}{2^{rN}}+(2\|u_0\|^r_{F^{q,N}_r})^2\frac{C_1T^{\frac{r-2}{2}}}{2^{rN(2-q)}}}
$$
We can take $T=n$ for some large integer $n$ and find $N$ such that $T_1>n$, as will be shown in the next lemma. For eventual regularity Theorem \ref{5Regularity} we will take $T=2^{2N}$ and therefore
 $$
T_1=\frac{1}{C_02^{-2N}+C_12^{rN(q-1)-2N}(2\|u_0\|^r_{F^{q,N}_r})^2}\approx 2^{2N},~\text{if}~q \le 1
$$
\end{remark}

\begin{lemma}\label{5uniform_bound}
  Assume $u_0\in F^{q}_r$ is divergence-free, and let $(u,p)$ be a local energy solution with initial data $u_0$ on $\Bbb R^3\times (0,\infty)$, we also assume that $(u,p)$ satisfies the following for all $N$:
  \EQ{
  \|u(\cdot,t)\|_{F_{r}^{q,N}}^r+\sum_{m=1}^{\infty}\Big(\frac{1}{|Q_m|^{\frac{q}{3}}}\int_0^t\int_{Q_m}|\nabla u|^2~dx~dt\Big)^{\frac{r}2}\le\infty \quad \forall t<\infty
  }
  Where $Q_m$ are cubes in $\mathcal{C}_N$. Then for any $n\in\Bbb N$ there exists $N$ and such that
  \EQ{\label{5UB}
  \|u(\cdot,t)\|_{F_{r}^{q,N}}^r+\sum_{m=1}^{\infty}\Big(\frac{1}{|Q_m|^{\frac{q}{3}}}\int_0^t\int_{Q_m}|\nabla u|^2~dx~dt\Big)^{\frac{r}2}\le c_0\|u_0\|_{F_{r}^{q,N}}^r \quad \forall t<n
  }
  Here $c_0$ is a global constant that does not depend on $n$.
\end{lemma}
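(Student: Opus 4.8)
The plan is to apply the Gronwall-type Lemma \ref{5Gronwall} to the scalar quantity $f(t)=\|u(\cdot,t)\|^r_{F^{q,N}_r}$, and to choose the shift $N$ large (depending on the target horizon $n$) so that the existence time $T_1$ produced by that lemma exceeds $n$. The uniform constant $c_0$ then comes out of a single bootstrap back into the a priori estimate.

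I would fix $n$ and set $T=n$ in Lemma \ref{5aprrox}. Discarding the nonnegative dissipation term on the left and bounding $t^{\frac{r-2}{2}}\le n^{\frac{r-2}{2}}$ for $t\le n$ turns that estimate into an integral inequality of exactly the shape required by Lemma \ref{5Gronwall}, namely $f(t)\le a+\int_0^t\big(b_1 f(s)+b_2 f(s)^3\big)\,ds$ with the data of Remark \ref{5timeT}: $a=\|u_0\|^r_{F^{q,N}_r}$, $b_1=C_0 n^{\frac{r-2}{2}}2^{-rN}$, $b_2=C_1 n^{\frac{r-2}{2}}2^{-rN(2-q)}$ and $m=3$. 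The standing finiteness hypothesis (together with $u\in L^\infty(0,T;L^2_{\loc})$) supplies $f\in L^\infty([0,n])$, so the lemma applies and yields $f(t)\le 2a$ on $(0,\min\{n,T_1\})$, where $T_1$ is the time of Remark \ref{5timeT}.

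The crux is to select $N=N(n)$ so that $T_1\ge n$, which amounts to the smallness condition $2C_0\, n^{r/2}2^{-rN}+c\,C_1\|u_0\|^{2r}_{F^{q,N}_r}\,n^{r/2}2^{-rN(2-q)}\le 1$ for the absolute constant $c$ of Remark \ref{5timeT}. Here Lemma \ref{5intialdata} is exactly what makes this possible: it gives both the uniform control $\|u_0\|_{F^{q,N}_r}\le C\|u_0\|_{F^q_r}$ and the decay $\|u_0\|_{F^{q,N}_r}\to 0$ as $N\to\infty$. For fixed $n$ the first term tends to $0$ since $2^{-rN}\to 0$; for the second term the behaviour depends on $q$, and this split is the one place that needs genuine care. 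When $q<2$ the factor $2^{-rN(2-q)}$ decays while $\|u_0\|_{F^{q,N}_r}$ merely stays bounded, whereas at the endpoint $q=2$ the exponential factor is identically $1$ and it is instead the vanishing of $\|u_0\|_{F^{q,N}_r}$ that kills the term. In both regimes a sufficiently large $N$ drives the left side below $1$, so $T_1\ge n$ and the bound $f(t)\le 2a$ holds on all of $(0,n)$.

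Finally, to recover the \emph{full} left-hand side of \eqref{5UB}, including the dissipation, I would return to the complete inequality of Lemma \ref{5aprrox} (keeping the gradient term on the left) and substitute the bound $f(s)\le 2a$ into the two time integrals on the right, using $\int_0^t f\le 2an$ and $\int_0^t f^3\le 8a^3 n$. The right-hand side is then controlled by $a+a\big(2C_0 n^{r/2}2^{-rN}+8C_1 a^2 n^{r/2}2^{-rN(2-q)}\big)$, and the very smallness condition used to force $T_1\ge n$ shows the parenthesis is bounded by an absolute constant. Hence the sum of the norm and the dissipation is $\le c_0\|u_0\|^r_{F^{q,N}_r}$ for an absolute constant $c_0$ (independent of $n$), which is precisely \eqref{5UB}. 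The main obstacle, as noted, is the uniform choice of $N$ across the endpoint $q=2$; everything else is the routine transcription of Lemmas \ref{5aprrox}, \ref{5Gronwall} and \ref{5intialdata}.
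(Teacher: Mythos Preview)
Your argument is correct and follows the same route as the paper: apply Lemma \ref{5aprrox} with $T=n$, feed the resulting integral inequality into the Gronwall-type Lemma \ref{5Gronwall} with $f(t)=\|u(\cdot,t)\|^r_{F^{q,N}_r}$ and the parameters of Remark \ref{5timeT}, then use Lemma \ref{5intialdata} (specifically $\|u_0\|_{F^{q,N}_r}\to 0$) to pick $N$ large enough that $T_1>n$. Your write-up is in fact slightly more complete than the paper's, since you spell out the $q=2$ endpoint and the bootstrap step that recovers the dissipation term in \eqref{5UB}, both of which the paper leaves implicit.
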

\begin{proof}
  We will combine Lemma \ref{5aprrox} and Lemma \ref{5Gronwall}, take some integer $n$ and $T=n$ in Lemma \ref{5aprrox}. From local energy estimate for all $N$ we have the following
    \EQ{
\gathered
\|u(\cdot,t)\|_{F_{r}^{q,N}}^r+\sum_{n=1}^{\infty}\Big(\frac{1}{|Q_n|^{\frac{q}{3}}}\int_0^t\int_{Q_n}|\nabla u|^2~dx~dt\Big)^{\frac{r}2}
\le  \|u(\cdot,0)\|_{F_r^{q,N}}^r +
\\
\frac{C_0n^{\frac{r-2}{2}}}{2^{rN}}\int_0^t\|u(\cdot,s)\|^r_{F_r^{q,N}}~ds+\frac{C_1n^{\frac{r-2}{2}}}{2^{rN(2-q)}}\int_0^t\|u(\cdot,s)\|^{3r}_{F_r^{q,N}}~ds
\endgathered }
  For all $t\le n$, therefore we can apply Gronwall lemma with $f=\|u(\cdot,t)\|_{F_r^{q,N}}$.  Since we know that $u_0\in F^q_r$ and $\underset{N\rightarrow\infty}\lim \|u_0\|_{F^{q,N}_r}=0$ we can find $N$ such that
  $$
T_1=\frac{1}{2\frac{C_0n^{\frac{r-2}{2}}}{2^{rN}}+(2\|u_0\|^r_{F^{q,N}_r})^2\frac{C_1n^{\frac{r-2}{2}}}{2^{rN(2-q)}}}>n
$$
Here $T_1$ is given to us by Lemma \ref{5Gronwall} and therefore we have a uniform bound \eqref{5UB}
\end{proof}

\begin{lemma}\label{5approx}
  Assume $f\in F^q_r$ is divergence free then for any $\varepsilon>0$ there exists a divergence free $g\in L^2$ such that $\|f-g\|_{F^{q}_r}\le\varepsilon$ and $\|f-g\|_{F^{q,m}_r}\le\varepsilon$ for any $m\in\Bbb N$
\end{lemma}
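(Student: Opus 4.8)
The plan is first to collapse the two required estimates into one. Writing $b_Q(f):=\big(|Q|^{-q/3}\int_Q|f|^2\,dx\big)^{1/2}$, so that $\|f\|_{F^q_r}=\|(b_Q(f))_Q\|_{\ell^r}$, the hypothesis $r>2$ makes $f\mapsto(b_Q(f))_Q\in\ell^r$ subadditive and hence $\|\cdot\|_{F^q_r}$ (and likewise each $\|\cdot\|_{F^{q,m}_r}$) a genuine norm obeying the triangle inequality. If I can produce a divergence-free, compactly supported $g\in L^2$ with $\|f-g\|_{F^q_r}\le\varepsilon'$, then, since a compactly supported $L^2$ field lies in $F^q_r$ (only finitely many cubes contribute) and hence so does $f-g$, the second bullet of Lemma \ref{5intialdata} applied to $f-g$ gives $\|f-g\|_{F^{q,m}_r}\le C\|f-g\|_{F^q_r}\le C\varepsilon'$ for every $m$, with $C$ independent of $m$. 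Choosing $\varepsilon'=\varepsilon/\max\{1,C\}$ then yields both bounds simultaneously, so the whole task reduces to approximation in the single norm $\|\cdot\|_{F^q_r}$.

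To build $g$ I would use a divergence-free truncation. Fix $R>0$ and a cutoff $\chi_R\in C^\infty_0(\R^3)$ with $\chi_R\equiv1$ on $B_R$, $\supp\chi_R\subset B_{2R}$, and $|\nabla\chi_R|\lesssim R^{-1}$. Because $\div f=0$, we have $\div(\chi_R f)=\nabla\chi_R\cdot f$, supported in the annulus $A_R=B_{2R}\setminus B_R$ and of zero mean since $\int\nabla\chi_R\cdot f\,dx=-\langle\div f,\chi_R\rangle=0$. Thus the Bogovskii operator on the bounded Lipschitz domain $A_R$ produces $w$ with $\div w=\nabla\chi_R\cdot f$ and $\supp w\subset\overline{A_R}$, and I set $g:=\chi_R f-w$, which is divergence free, compactly supported, and in $L^2$.

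It then remains to estimate $\|f-g\|_{F^q_r}\le\|(1-\chi_R)f\|_{F^q_r}+\|w\|_{F^q_r}$. The first term is at most $\big(\sum_{Q\cap\{|x|\ge R\}\ne\emptyset}b_Q(f)^r\big)^{1/r}$ using $0\le1-\chi_R\le1$, which is a tail of $\|f\|_{F^q_r}^r$ and vanishes as $R\to\infty$. For the second, the scale-invariant Bogovskii bound on $A_R$ gives $\|w\|_{L^2(A_R)}\lesssim R\,\|\nabla\chi_R\cdot f\|_{L^2(A_R)}\lesssim\|f\|_{L^2(A_R)}$ uniformly in $R$, the length factor $R$ from the zeroth-order estimate cancelling the $R^{-1}$ from $\nabla\chi_R$. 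Only boundedly many cubes, all of side $\sim R$, meet $A_R$; combining $b_Q(w)\lesssim R^{-q/2}\|f\|_{L^2(A_R)}$ with $\|f\|_{L^2(A_R)}^2\lesssim R^q\sum_{Q\cap A_R\ne\emptyset}b_Q(f)^2$ makes the weights cancel, so that $\|w\|_{F^q_r}^r\lesssim\sum_{Q\cap A_R\ne\emptyset}b_Q(f)^r$, again a tail of $\|f\|_{F^q_r}^r$. Taking $R$ large makes both pieces $\le\varepsilon'$.

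The soft reduction via Lemma \ref{5intialdata} is immediate, so the substance lies in the correction $w$: the naive truncation $\chi_R f$ is not divergence free, and the field restoring this property must be controlled in the \emph{weighted} norm rather than merely in $L^2$. I expect the main obstacle to be the scaling bookkeeping on the annulus, namely verifying that the length factor from the zeroth-order Bogovskii estimate, the derivative loss from $\nabla\chi_R$, and the cube weight $|Q|^{-q/6}\sim R^{-q/2}$ all cancel, so that $\|w\|_{F^q_r}$ is genuinely dominated by the same tail of $f$ that controls $(1-\chi_R)f$. Once this balance and the admissibility of Bogovskii on $A_R$ are checked, the remaining steps (triangle inequality and convergence of the tails) are routine.
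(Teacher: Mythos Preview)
Your proposal is correct and follows essentially the same construction as the paper: a smooth cutoff $\chi_R$ plus a Bogovskii correction $w$ supported in the annulus $B_{2R}\setminus B_R$, with the scale-invariant bound $\|w\|_{L^2(A_R)}\lesssim\|f\|_{L^2(A_R)}$ and the observation that only boundedly many cubes of side $\sim R$ meet $A_R$. Your opening reduction of the $F^{q,m}_r$ bounds to the single $F^q_r$ bound via the second bullet of Lemma~\ref{5intialdata} is a clean addition that the paper's proof leaves implicit.
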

\begin{proof}
  Let $\varepsilon>0$ and $f\in F^q_r$ is divergence free, we choose $R>0$ such that $\|f\chi_{\Bbb R^3\setminus B_{R}}\|_{F^q_r}\le\varepsilon$. Then we can find smooth cut-off radial function $\phi\le 1$ such that $\supp \phi\subset B_{2R}$ and $\phi=1$ on $B_R$. Then we can use Bogovskii map \cite{BogovskiiBibl} to construct function $h$ such that
  \EQ{\label{5bogovski}
  \div h=-f\nabla\phi,~\supp h\subset B_{2R}\setminus B_R,~\|h\|_{W^{1,2}}\le \|f\nabla\phi\|_{2, B_{2R}\setminus B_R}.
  }
  Moreover, we can estimate the following norm:
  \EQ{\label{5b1}
  \int_{B_{2R}\setminus B_R}|h|^2~dx\le C_0R^2 \int_{B_{2R}\setminus B_R}|f|^2|\nabla\phi|^2~dx\le C_0 \int_{B_{2R}\setminus B_R}|f|^2~dx
  }
  For some universe constant $C_0$. Next we choose $g=f\phi+h$, from \eqref{5bogovski}we get that $\div g=0$. Now we only need to check that $\|f-g\|_{F^{q}_r}$ is small, indeed
   \EQ{\gathered
  \|f-g\|_{F^{q}_r}\le \|f\chi_{\Bbb R^3\setminus B_R}\|_{F^q_r}+\|h\|_{F^q_r}\le
    \\
  \|f\chi_{\Bbb R^3\setminus B_R}\|_{F^q_r}+\big(\sum_{Q\in\mathcal{C}|~Q\cap B_{2R}\setminus B_R\neq\emptyset}(\frac{1}{|Q|^{q/3}}\int_Q|h|^2~dx)^{r/2}\big)^{\frac{1}{r}}.
    \endgathered}
  Notice that there are only finite amount of cubes in $\mathcal{C}$ intersecting $B_{2R}\setminus B_R$ and the amount does not depend on $R$ due to properties of $C$ moreover all such cubes have comparable radius to $R$, therefore for some universe constant $C$ we have the following
  \EQ{\gathered
  \sum_{Q\in\mathcal{C}|~Q\cap B_{2R}\setminus B_R\neq\emptyset}(\frac{1}{|Q|^{q/3}}\int_Q|h|^2~dx)^{r/2}\le (\frac{C}{|B_R|^{q/3}}\int_{B_{2R}\setminus B_R} |h|^2~dx)^{r/2}\le
   \\
   (\frac{C}{|B_R|^{q/3}}\int_{B_{2R}\setminus B_R} |f|^2~dx)^{r/2}\le C\|f\chi_{\Bbb R^3\setminus B_R}\|_{F^q_r}\le C\varepsilon.
  \endgathered}
   Here we used \eqref{5b1}. Therefore $g$ satisfies all the conditions of the lemma.
\end{proof}
Next we will prove the main theorem:
\begin{proof}
Take $n\in \Bbb N$, from Lemma \ref{5approx} there exists $u_0^n\in L^2(\Bbb R^3)$ such that $\|u_0-u_0^n\|_{F^{q}_r}\le\frac{1}{n}$ and $\|u_0-u_0^n\|_{F^{q,m}_r}\le\frac{1}{n}$ for any $m\in\Bbb N$. Let $(u^n,\bar{p}^n)$ be a global Leray solution for initial data $u_0^n$. By Lemma \ref{5uniform_bound} there exists a sequence $N_n$ such that $N_n>n$ and $u^n$ is bounded uniformly on $B_{N_n}\times[0,n]$. Hence, there exists a sub-sequence $u^{1,k}$ that converges on $B_{N_1}\times (0,1)$ in the following sense:
\EQ{\gathered
u^{1,k}\overset{*}{\rightharpoonup} u_1~\textrm{in} ~L^{\infty}(0,1;L^2(B_{N_1})),
\\
u^{1,k}\rightharpoonup u_1~\textrm{in} ~L^{\infty}(0,1;H^1(B_{N_1})),
\\
u^{1,k}\rightarrow u_1~\textrm{in} ~L^{3}(0,1;L^3(B_{N_1})).
\endgathered}
By Lemma \ref{5uniform_bound} the sequence $u^{1,k}$ is also uniformly bounded on $B_{N_n}\times[0,n]$ for any $n\in\Bbb N$. Therefore by induction we construct a subsequence $\{u^{n,k}\}_{k\in\Bbb N}$ from $\{u^{n-1,k}\}_{k\in\Bbb N}$ which converges to a vector field $u_n$ on $B_{N_n}\times(0,n)$ as $k\rightarrow\infty$ in the following sense:
\EQ{\gathered
u^{n,k}\overset{*}{\rightharpoonup} u_n~\textrm{in} ~L^{\infty}(0,n;L^2(B_{N_n})),
\\
u^{n,k}\rightharpoonup u_n~\textrm{in} ~L^{\infty}(0,n;H^1(B_{N_n})),
\\
u^{n,k}\rightarrow u_n~\textrm{in} ~L^{3}(0,n;L^3(B_{N_n})).
\endgathered}
Denote $\tilde{u_n}$ as a $0$ extension of $u_n$  to $\Bbb R^3\times (0,\infty)$. From our construction $\tilde{u_n}=\tilde{u_{n-1}}$ on $B_{N_{n-1}}\times(0,n-1)$. Let $u=\lim_{n\rightarrow\infty} \tilde{u_n}$. Then, $u=u_n$ on $B_{N_n}\times(0,n)$ for every $n\in\Bbb N$.

Let $u^{(k)}=u^{k,k}$ on $B_{N_k}\times(0,k)$ and $0$ elsewhere. Then for any $n\in\Bbb N$ we have the following as $k\rightarrow\infty$:
\EQ{\gathered\label{5ukconv}
u^{(k)}\overset{*}{\rightharpoonup} u~\textrm{in} ~L^{\infty}(0,n;L^2(B_{N_n})),
\\
u^{(k)}\rightharpoonup u~\textrm{in} ~L^{\infty}(0,n;H^1(B_{N_n})),
\\
u^{(k)}\rightarrow u~\textrm{in} ~L^{3}(0,n;L^3(B_{N_n})).
\endgathered}
Using Lemma \ref{5uniform_bound} we have the following bound for $u$:
 \EQ{
  \sup_{0<t<n}\|u(\cdot,t)\|_{F_{r}^{q,N}}^r+\sum_{m=1}^{\infty}\Big(\frac{1}{|Q_m|^{\frac{q}{3}}}\int_0^n\int_{Q_m}|\nabla u|^2~dx~dt\Big)^{\frac{r}2}\le c_0\|u_0\|_{F_{r}^{q,N}}^r
  }
The pressure is dealt similarly to \cite{BK}, indeed we know that there is weak limit $p^{(k)}\rightharpoonup p$ in $L^{3/2}((0,n)\times Q_{N_n})$, we will check the pressure decomposition formula and strong convergence. Take $Q\in \mathcal{C},~T>0$, then for any $k$ $(u^{(k)},p^{(k)})$ are global Leray energy solutions and there exists $p_Q^{(k)}$ such that for all $(x,t)\in Q\times[0,T]$:
\EQ{\gathered
p^{(k)}(x,t)-p_Q^{(k)}=G_{ij}^Q(u^{(k)}_iu^{(k)}_j)=-\frac13\delta_{ij}u^{(k)}_iu^{(k)}_j+
\\
\textrm{p.v.}\int_{y\in Q^{**}}K_{ij}(x-y)(u_i^{(k)}u_j^{(k)})(y)~dy+
\\
+\textrm{p.v.}\int_{y\notin Q^{**}}(K_{ij}(x-y)-K_{ij}(x_Q-y))(u_i^{(k)}u_j^{(k)})(y)~dy
\endgathered}
Choose $m>T$ so that $Q^{**}\subset Q_m=Q(0,2^m)$ and separate $G_{ij}^Q(u_iu_j)$ into two terms
\EQ{\gathered
p_1^{(k)}(x,t)=-\frac13\delta_{ij}u^{(k)}_iu^{(k)}_j+\textrm{p.v.}\int_{y\in Q^{**}}K_{ij}(x-y)(u_i^{(k)}u_j^{(k)})(y)~dy+\\
+\textrm{p.v.}\int_{Q_m\setminus Q^{**}}(K_{ij}(x-y)-K_{ij}(x_Q-y))(u_i^{(k)}u_j^{(k)})(y)~dy
\\
p_2^{(k)}(x,t)=\textrm{p.v.}\int_{y\notin Q_m}(K_{ij}(x-y)-K_{ij}(x_Q-y))(u_i^{(k)}u_j^{(k)})(y)~dy
\endgathered}
We denote $p_1,p_2$ as similar terms but where $u^{(k)}$ is substituted by $u$. To prove convergence we choose $\varepsilon>0$ arbitrary small and estimate the difference
\EQ{\gathered
|p_2-p_2^{(k)}|\le \int_{y\notin Q_m}(K_{ij}(x-y)-K_{ij}(x_Q-y))|u_i^{(k)}u_j^{(k)}-u_iu_j|(y)~dy\le
\\
\sum_{\tilde{Q}\in\mathcal{C}~|\tilde{Q}|^{1/3}\ge 2^{m-1}}\int_{\tilde{Q}}(K_{ij}(x-y)-K_{ij}(x_Q-y))|u_i^{(k)}u_j^{(k)}-u_iu_j|(y)~dy
\le
\\
\sum_{\tilde{Q}\in\mathcal{C}~|\tilde{Q}|^{1/3}\ge 2^{m-1}}\int_{\tilde{Q}}\frac{|Q|^{1/3}}{|\tilde{Q}|^{4/3}}|u_i^{(k)}u_j^{(k)}-u_iu_j|(y)~dy.
\endgathered}
Here we used that $Q^{**}\subset Q_m$ and therefore $|K_{ij}(x-y)-K_{ij}(x_Q-y)|\le C\frac{|Q|^{1/3}}{|\tilde{Q}|^{4/3}}$. Next we will apply Holder inequality
\EQ{\gathered
\sum_{\tilde{Q}\in\mathcal{C}~|\tilde{Q}|^{1/3}\ge 2^{m-1}}\int_{\tilde{Q}}\frac{|Q|^{1/3}}{|\tilde{Q}|^{4/3}}|u_i^{(k)}u_j^{(k)}-u_iu_j|(y)~dy\le
\\
|Q|^{1/3}\Big(\sum_{\tilde{Q}\in\mathcal{C}~|\tilde{Q}|^{1/3}\ge 2^{m-1}}|\tilde{Q}|^{\frac{(q-4)r}{3(r-2)}}\Big)^{\frac{r-2}{r}}
\\
\Big( \sum_{\tilde{Q}\in\mathcal{C}~|\tilde{Q}|^{1/3}\ge 2^{m-1}}\Big(\frac{1}{|\tilde{Q}|^{q/3}}\int_{\tilde{Q}}|u_i^{(k)}u_j^{(k)}-u_iu_j|(y)~dy\Big)^\frac{r}{2}\Big)^{2/r}\le
\\
|Q|^{1/3}\Big(\sum_{\tilde{Q}\in\mathcal{C}~|\tilde{Q}|^{1/3}\ge 2^{m-1}}|\tilde{Q}|^{\frac{(q-4)r}{3(r-2)}}\Big)^{\frac{r-2}{r}}(\|u\|_{F_r^{q,m}}^2+\|u^{(k)}\|_{F_r^{q,m}}^2)\le
\\
C|Q|^{1/3}2^{m(q-4)}(\|u_0\|_{F_r^{q,m}}^2+\|u^{(k)}_0\|_{F_r^{q,m}}^2).
\endgathered}
Here we used \eqref{5UB}, since we have uniform bound for $u,u^{(k)}$ up to time $T$ with our choice of $m$. Notice that $q<4$ and $u^{(k)}_0$ converges to $u_0$, therefore we choose $m,k$ large enough so that $\|u^{(k)}_0\|_{F_r^{q,m}}^2\le \varepsilon+\|u_0\|_{F_r^{q,m}}^2$ and
\EQ{
C|Q|^{1/3}2^{m(q-4)}(\|u_0\|_{F_r^{q,m}}^2+\|u^{(k)}_0\|_{F_r^{q,m}}^2)\le 2\varepsilon
}
Next we apply \eqref{5ukconv} to get large enough $k$ so that
\EQ{
\|p_1^{(k)}-p^1\|_{L^{3/2}(Q\times[0,T])}\le \varepsilon
}
This proves that $G_{ij}^Q(u^{(k)}_iu^{(k)}_j)\rightarrow G_{ij}^Q(u_iu_j)$ in $L^{3/2}(Q\times[0,T])$. Lastly, in any fixed domain $Q\times[0,T]$ pair $(u,G^Q_{ij}(u_iu_j)$ solves \eqref{5NS}. Therefore, $\nabla p=\nabla G_{ij}^Q(u_iu_j)$ in $\mathcal{D}'(\Bbb R^3)$ at every time $t$ and so there exists a function of time $p_Q(t)$ such that
$$
p(x,t)= G_{ij}^Q(u_iu_j)(x)+p_Q(t), ~\text{for} ~(x,t)\in Q\times [0,T]
$$
Also from this identity we get $p_Q(t)\in L^{\frac{3}{2}}(0,T)$. The rest of the proof follows from the similar argument to \cite{BK}, because we have the same convergence of $u^k,p^k$ on any $Q\times T_0$ for any fixed $T_0>0$.

 \end{proof}

\section{Eventual regularity}\label{5regularsection}
In this Section we will prove Theorem \ref{5Regularity} , we will use one of Caffarelli-Kohn-Nirenberg epsilon-regularity criteria variations, see \cite{CKN}
\begin{lemma}\label{5epsilon}
  For any $\sigma\in(0,1)$, there exists a universal constant $\varepsilon_*(\sigma)>0$ such that, if a pair $(u,p)$ is a suitable weak solution of \eqref{5NS} in $Q_r=B_r(x_0)\times(t_0-r^2,t_0),$ and satisfies the bound
\EQ{
\varepsilon^3=\frac{1}{r^2}\int_{Q_r}(|u|^3+|p|^{\frac32})~dx~dt<\varepsilon_*,
}
Then $u\in L^{\infty}(Q_{\sigma r}).$ Moreover, there is $L^{\infty}$ estimate
\EQ{
\|\nabla^k u\|_{L^{\infty}(Q_{\sigma r})}\le C_k\varepsilon r^{-k-1},~\forall k\in\Bbb Z_+
}
for universal constants $C_k=C_k(\sigma)$.
\end{lemma}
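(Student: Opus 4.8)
This is a parabolic $\varepsilon$-regularity statement of Caffarelli–Kohn–Nirenberg type, so the plan is to follow the partial-regularity scheme of \cite{CKN}; I sketch the route and indicate where the real work lies.

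\textbf{Reduction to unit scale.} The system \eqref{5NS} is invariant under the parabolic rescaling $u_\lambda(y,s)=\lambda u(x_0+\lambda y,\,t_0+\lambda^2 s)$, $p_\lambda(y,s)=\lambda^2 p(x_0+\lambda y,\,t_0+\lambda^2 s)$, and a direct change of variables shows the excess $\frac{1}{r^2}\int_{Q_r}(|u|^3+|p|^{3/2})\,dx\,dt$ is unchanged. Taking $\lambda=r$ and translating $(x_0,t_0)$ to the origin, it suffices to prove the estimate on $Q_1=B_1\times(-1,0)$: if $\int_{Q_1}(|u|^3+|p|^{3/2})=\varepsilon^3<\varepsilon_*$ then $\|\nabla^k u\|_{L^\infty(Q_\sigma)}\le C_k\varepsilon$. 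Undoing the scaling replaces $\varepsilon$ by the invariant excess and multiplies the $k$-th derivative bound by $r^{-k-1}$, which is exactly the claimed form.

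\textbf{Excess decay.} The core is a one-step improvement for the dimensionless quantity $E(\rho)=\frac{1}{\rho^2}\int_{Q_\rho}(|u|^3+|p|^{3/2})$: I would prove that there are $\theta\in(0,\tfrac12)$, a universal $C$, and $\gamma>1$ with $E(\theta\rho)\le C\theta\,E(\rho)+C\,E(\rho)^\gamma$ for all $\rho\le 1$. The velocity contribution is bounded by feeding the local (CKN) energy inequality satisfied by suitable weak solutions into the Gagliardo–Nirenberg interpolation $\|u\|_{L^3(B_\rho)}^3\lec \|u\|_{L^2}^{3/2}\|\nabla u\|_{L^2}^{3/2}+\rho^{-3/2}\|u\|_{L^2}^3$, trading $L^3$ integrability for the energy $\sup_s\int|u|^2$ and dissipation $\int|\nabla u|^2$ and yielding the superlinear term $E(\rho)^\gamma$. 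For the pressure I would split, on each $B_\rho$, $p=p_\loc+p_{\mathrm{har}}$ where $-\Delta p_\loc=\partial_i\partial_j(u_iu_j)\chi_{B_\rho}$; Calderón–Zygmund controls $\|p_\loc\|_{L^{3/2}(B_\rho)}$ by $\|u\|_{L^3(B_\rho)}^2$, while $p_{\mathrm{har}}$ is harmonic in $B_\rho$, so interior estimates give the genuine decay $\frac{1}{(\theta\rho)^2}\int_{B_{\theta\rho}}|p_{\mathrm{har}}|^{3/2}\lec\theta^{\,\beta}\,\frac{1}{\rho^2}\int_{B_\rho}|p_{\mathrm{har}}|^{3/2}$ that produces the linear-in-$\theta$ gain. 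Fixing $\theta$ small, then $\varepsilon_*$ small, the inequality iterates to $E(\theta^j)\le C\,\theta^{\,j\alpha}$ for some $\alpha>0$, i.e.\ a Morrey decay $\frac{1}{\rho^2}\int_{Q_\rho}|u|^3\lec\rho^{\alpha}$ near the origin.

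\textbf{Boundedness and bootstrap.} This Morrey decay places $u$ in a parabolic Campanato space that embeds into $C^{\alpha'}(Q_{1/2})$; in particular $u\in L^\infty(Q_{1/2})$. With $u$ now bounded, I would treat \eqref{5NS} as a perturbed Stokes system $\partial_t u-\Delta u+\nabla p=-\nabla\cdot(u\otimes u)$, $\div u=0$, and bootstrap using interior maximal regularity / Schauder estimates together with the representation of $p$ through $u$ used above: each step gains a derivative, giving $u\in C^\infty$ in the interior and the quantitative bounds $\|\nabla^k u\|_{L^\infty(Q_\sigma)}\le C_k\varepsilon$ at unit scale, which scaling converts to $C_k\varepsilon r^{-k-1}$.

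\textbf{Main obstacle.} The one genuinely delicate point is the nonlocality of the pressure: $p$ is not pointwise controlled by $u$ on $Q_\rho$, so the decay of $E(\rho)$ hinges on the harmonic/Calderón–Zygmund splitting and, crucially, on the harmonic remainder decaying (not merely staying bounded) under shrinking the ball. Keeping every constant universal — depending on $\sigma$ only through the fixed number of iterations needed to reach scale $\sigma$ — is the bookkeeping that must be done with care; an alternative that sidesteps the explicit iteration is a blow-up/compactness argument, extracting from a would-be counterexample sequence a limit solution with zero excess and deriving a contradiction.
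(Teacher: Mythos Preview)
Your sketch is a reasonable outline of the Caffarelli--Kohn--Nirenberg argument, but note that the paper does not actually prove this lemma: it is quoted as a known $\varepsilon$-regularity criterion with a reference to \cite{CKN}, and no proof is given. So there is nothing to compare against; your write-up simply supplies what the paper takes as a black box from the literature.
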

Next we will prove Theorem \eqref{5Regularity}
\begin{proof}
  Choose sufficiently large $T,N_1$ such that $\|u_0\|_{F^{q,m}_r}<1$ for all $m> N_1$ and Lemma \ref{5uniform_bound} holds for $N>N_1$ and $t<T$. We take some $N>N_1$ and denote the following:
  \EQ{
  Q=(-2^N,2^N)^3.
  }
  Note that $Q\in\mathcal{C}_N$. From Lemmas \ref{5pressure},\ref{5uniform_bound} we have that
  \EQ{\gathered
  J=\frac{1}{|Q|^{\frac23}}\int_0^t\int_Q(|u|^3+|p-(p)_Q(s)|^{\frac{3}{2}}~dx~dx\le
  \\
  \le \frac{C}{|Q|^{\frac{1-q}2}}\int_0^t\Big(\sum_{Q'\cap (Q^{**})^{c}\neq \emptyset}\Big(\frac{1}{|Q'|^{\frac{q}{3}}}\int_{Q'}|u|^2~dy\Big)^{\frac{r}2}\Big)^{\frac{3}{r}}~ds
+\frac{C}{|Q|^{\frac{2}3}}\int_0^t\int_{Q^{**}}|u|^3~dx~ds,
  \endgathered}
 here cubes $Q'$ are part of family $\mathcal{C}_N$. Next we apply \eqref{5convectiveterm}:
 \EQ{\gathered
\frac{1}{|Q|^{\frac23}}\int_0^t\int_{Q}|u|^3~dx~dt\le C(\varepsilon)|Q|^{q-\frac53}\int_0^t\Big(\frac{1}{|Q|^{\frac{q}{3}}}\int_{Q}|u|^2~dx\Big)^3~dt+
\\
\frac{\varepsilon}{|Q|^{\frac{1}{3}}}\int_0^t\int_{Q}|\nabla u|^2~dx~ds+C|Q|^{\frac{q}{2}-\frac{7}{6}}\int_0^t\Big(\frac{1}{|Q|^{\frac{q}{3}}}\int_{Q}|u|^2~dx\Big)^{\frac{3}{2}}~ds
\endgathered}
Since $q\le1$ we can apply Lemma \ref{5uniform_bound} to get the following:
\EQ{
J=\frac{1}{|Q|^{\frac23}}\int_0^t\int_Q(|u|^3+|p-(p)_Q(s)|^{\frac{3}{2}}~dx~dx\le \|u_0\|_{F^{1,r}_N},~\forall t<T.
}
Take $\sigma<1$ from Lemma \ref{5epsilon} arbitrary small. From Lemma \ref{5intialdata} we can choose $m$ large enough so that $\|u_0\|_{F^{1,r}_m}\le\varepsilon(\sigma)$. Then by Remark \ref{5timeT} we can take $T=c_*2^m$ with $c_*<1$ and get that
\EQ{
\frac{1}{c_*2^{2m}}\int_{0}^{c_*2^{2m}}\int_{B_{2^m}(0)}(|u|^3+|p-(p)_Q|^{\frac{3}{2}})~dx~dt\le \varepsilon(\sigma)
}
By epsilon-regularity Lemma \ref{5epsilon} we get that $u$ is regular in
\EQ{
Z_m=B_{\sigma \sqrt{c_*}2^{m}}\times [(1-\sigma^2)c_*2^{2m},c_*2^{2m}],
}
and $\|u\|_{L^{\infty}(Z_m)}\le C2^{-m}$. Take $\delta<1$ and choose $c_*,\sigma$ so that $Z_m$ contains
\EQ{
P_m=\{(x,t)\in \Bbb R^4_+:~ \delta |x|^2\le t,~~(1-\sigma^2)c_* 2^{2m}\le t\le 4(1-\sigma^2) c_*2^{2m}\}.
}
After taking the union by $m\ge N_1$ we prove the statement of \eqref{5Regularity}.

\end{proof}

\end{document}